\theoremstyle{plain}
\newtheorem{thm}{Theorem}[section]
\newtheorem{theorem}[thm]{Theorem}
\newtheorem{lemma}[thm]{Lemma}
\newtheorem{proposition}[thm]{Proposition}
\newtheorem{corollary}[thm]{Corollary}
\theoremstyle{definition}
\newtheorem{definition}[thm]{Definition}
\newtheorem{remark}[thm]{Remark}
\newtheorem{example}[thm]{Example}
\numberwithin{equation}{section}
\DeclareMathOperator{\mult}{mult}
\begin{document}
\title[Hunting for Miyaoka-Kobayashi curves]{Hunting for Miyaoka-Kobayashi curves}

\author[Alexandru Dimca]{Alexandru Dimca}
\address{Universit\'e C\^ ote d'Azur, CNRS, LJAD, France and Simion Stoilow Institute of Mathematics,
P.O. Box 1-764, RO-014700 Bucharest, Romania.}
\email{dimca@unice.fr}

\author[Piotr Pokora]{Piotr Pokora}
\address{Department of Mathematics,
University of the National Education Commission Krakow,
Podchor\c a\.zych 2,
PL-30-084 Krak\'ow, Poland.}
\email{piotr.pokora@up.krakow.pl}

\thanks{\vskip0\baselineskip
\vskip-\baselineskip
\noindent Communicated by Y. Namikawa. Received February 17, 2023. Revised November 9, 2023.}

\subjclass[2020]{Primary: 14H50; Secondary:  14N20, 32S35}
\maketitle

\thispagestyle{empty}
\begin{abstract}
 We study the possible singularities of Miyaoka-Kobayashi curves and prove several results about the non-existence of such curves in degrees $\geq 8$. \\ 
 \textbf{Keywords}: Miyaoka-Kobayashi numbers, MK-curves, free curves, ADE-singularities.
\end{abstract}

\maketitle
\section{Introduction}

In the present paper we recall and study, after years of oblivion, the class of Miyaoka-Kobayashi curves (MK-curves for short). These  complex projective plane curves of even degree $n \geq 6$ were defined and studied in Hirzebruch's paper \cite{Hirzebruch} and in his student Ivinskis' Master Thesis \cite{Ivinskis}.
They are important for constructing complex ball-quotient surfaces - the related facts are briefly recalled in Section $2$ below as a motivation for our paper.

In Section $3$, we rephrase firstly the definition of MK-curves in terms of total Tjurina numbers, see Corollary \ref{C1}. Then, using deep properties of these Tjurina numbers going back to results by du Plessis and Wall \cite{duP}, we show in Theorem \ref{T1} that an MK-curve must have a lot of singularities. Finally, we quote Ivinskis' result saying that in degree 6, there are exactly three types of MK-curves, see Theorem \ref{thmI}. 

On the other hand, in degrees $n \geq 8$ no example of MK-curve is known, and most of our results give restrictions on the singularities that an MK-curve might have.
For instance, in Section 4, we get restrictions on the singularities of an irreducible MK-curve using the dual curves.

In Section $5$, we prove a number of non-existence results for MK-curves having only some classes of ADE singularities, and this is done using Langer's version on orbifold Bogomolov-Miyaoka-Yau inequality from \cite{Langer}. The main results here are
Theorems \ref{thmn=8}, \ref{thmE6}, \ref{thmE8} and Corollaries
\ref{corn=10}, \ref{corA1}, \ref{corA2}.

Finally, in Section $6$ we try to understand if there is a relation between MK-curves and free curves. This question is motivated by the fact that
in degree $6$, where we have three MK-curves, one of them is a free curve, and the other two are nearly free, see Example \ref{exNU} for details.
We recall the definition of the freeness defect $\nu(C)$ for any reduced plane curve and give a formula for $\nu(C)$ in terms of the total Tjurina number $\tau(C)$ when $C$ has only ADE singularities, see Theorem \ref{thmNU}. In Example \ref{exNU2}, we compute the freeness defect $\nu(C)$ for two curves of degree $18$, constructed by Bonnaf\'e in
\cite{Bonnafe}, and which are in some sense close to being MK-curves.
This computation shows that one of them is nearly free, while the other has no special freeness properties.

\section{Why do we care about MK-curves?}

Let us describe first the setting for the constructions of complex ball-quotient surfaces using MK-curves.

\begin{definition}
\label{D1}
Let $X$ be a complex quasi-smooth projective surface, i.e. a surface having only isolated quotient singularities. For a quotient singularity  $p \in {\rm Sing}(X)$ we denote by $G_{p}$ the associated small group. Denote by $\pi : \tilde{X} \rightarrow X$ the minimal resolution of singularities of $X$ and by $E:=\sum_{p \in {\rm Sing}(X)}E_{p}$ the union of all exceptional curves over $p \in {\rm Sing}(X)$. The rational number
$$m(p)=m(E_{p}):=3(e(E_{p})-1/{\rm ord}(G_{p})) - K(E_{p})^{2},$$
where $K(E_{p})$ denotes the local canonical divisor of $p$ and $e(E_{p})$ denotes the Euler characteristic, is called the Miyaoka-Kobayashi number of the quotient singularity $p \in {\rm Sing}(X)$.
\begin{remark}
For the convenience of the reader, let us recall the definition of the local canonical divisor $K(E_{p})$. Let $X$ be a normal projective surface and let $\pi : \tilde{X} \rightarrow X$ be a minimal resolution of singularities of $X$. For $p \in X$ being a singular point, let $E_{p} = \sum_{i=1}^{r}E_{i}$ be the associated exceptional curve. Now the local canonical divisor of $p\in X$, denoted by $K(E_{p}) = \sum_{i=1}^{r}c_{i}E_{i}$, is a $\mathbb{Q}$-divisor uniquely defined by the condition
$$K(E_{p}).E_{i} = K_{\tilde{X}}.E_{i} \quad \text{ for all } \quad i \in \{1, ..., r\}.$$
\end{remark}
\end{definition}
The following result is related to simply singular curves, i.e., curves having ${\rm ADE}$ singularities, in the setting of MK numbers, \cite{Hirzebruch}.
\begin{theorem}[Hirzebruch-Ivinskis]
\label{thm1}
Let $C \subset \mathbb{P}^{2}_{\mathbb{C}}$ be a reduced simply singular curve of even degree $n\geq 6$. Then one has
$$\sum_{p \in {\rm Sing}(C)}m(p)\leq \frac{n}{2}(5n-6),$$
where $m(p)$ denotes the Miyaoka-Kobayashi number of the double cover $X$ of  $\mathbb{P}^{2}_{\mathbb{C}}$ ramified along $C$ at the point $p\in X$, identified to the singularity $p \in C$.
\end{theorem}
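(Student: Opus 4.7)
The plan is to apply an orbifold Bogomolov--Miyaoka--Yau inequality to the double cover of $\mathbb{P}^2$ branched along $C$, which exists precisely because $n$ is even. The Miyaoka-Kobayashi number $m(p)$ is engineered to record exactly the orbifold correction at each quotient singularity, so once the global Chern numbers of the double cover are computed, the desired bound should drop out directly from BMY.

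I would start by constructing the double cover $\pi\colon X \to \mathbb{P}^2_{\mathbb{C}}$ branched along $C$ and analyzing its singularities. A direct local computation shows that every ADE plane curve singularity of $C$ lifts to a Du Val surface singularity of the same type on $X$: for example, if $C$ has local equation $y^2 = x^{k+1}$ (an $A_k$ singularity), then $X$ is locally $w^2 = y^2 - x^{k+1}$, which after the substitution $u = w-y$, $v = w+y$ becomes $uv = x^{k+1}$, the $A_k$ rational double point. Hence $X$ has only Du Val singularities, in particular quotient singularities, so $X$ is quasi-smooth in the sense of Definition~\ref{D1}.

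I would then compute the relevant global invariants of $X$ and of its minimal resolution $\widetilde{X}$. By the standard double-cover formula, $K_X = \pi^*\bigl(\tfrac{n-6}{2}\,H\bigr)$ (with $H$ the hyperplane class), so $K_X^2 = (n-6)^2/2$. Since Du Val singularities are crepant, $K_{\widetilde X}^2 = K_X^2 = (n-6)^2/2$. For the topological Euler characteristic I would combine the double-cover formula $e(X) = 2e(\mathbb{P}^2) - e(C)$ with the identity $e(C) = n(3-n) + \mu(C)$ (which follows from the normalization of $C$ together with the relation $\mu_p = 2\delta_p - r_p + 1$), and with the observation that the minimal resolution of an ADE surface singularity of Milnor number $\mu_p$ introduces a tree of $\mu_p$ many $(-2)$-curves, so $e(E_p) = \mu_p + 1$. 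Since $\mu_p = \tau_p$ for ADE, the two contributions involving $\mu(C)$ cancel and one obtains
$$e(\widetilde X) \;=\; 2e(\mathbb{P}^2) - e(C) + \sum_p \mu_p \;=\; n^2 - 3n + 6.$$

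Finally I would invoke the orbifold BMY inequality of Miyaoka--Sakai--Kobayashi (in Langer's form) applied to the quasi-smooth surface $X$:
$$K_X^2 \;\leq\; 3\Bigl(e(\widetilde X) - \sum_p \bigl(e(E_p) - 1/{\rm ord}(G_p)\bigr)\Bigr).$$
By adjunction on each exceptional $(-2)$-curve one has $K(E_p) = 0$ at every Du Val singularity, so the definition of the Miyaoka-Kobayashi number collapses to $m(p) = 3\bigl(e(E_p) - 1/{\rm ord}(G_p)\bigr)$, and rearranging gives
$$\sum_p m(p) \;\leq\; 3e(\widetilde X) - K_{\widetilde X}^2 \;=\; 3(n^2 - 3n + 6) - \frac{(n-6)^2}{2} \;=\; \frac{n}{2}(5n-6),$$
which is the claimed inequality. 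The main obstacle is the orbifold BMY inequality itself, a deep consequence of Yau's solution of the Calabi conjecture in its quasi-smooth extension; its hypotheses are easy to verify in our range, since $K_X$ is the pullback of a nef (and, for $n \geq 8$, ample) divisor class.
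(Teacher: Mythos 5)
Your proof is correct and follows essentially the same route the paper indicates: the paper derives this inequality by applying Kobayashi's orbifold BMY-type result (Theorem \ref{ball-quotient}) to the double cover $X\to\mathbb{P}^2$ branched along $C$, with $D=0$ and $E$ the exceptional locus of the minimal resolution, exactly as you do via the Langer/Miyaoka form of orbifold BMY for the quasi-smooth surface $X$. Your explicit computations ($K_{\widetilde X}^2=(n-6)^2/2$, $e(\widetilde X)=n^2-3n+6$, hence $3c_2-c_1^2=\frac{n}{2}(5n-6)$, and $m(p)=3(e(E_p)-1/\mathrm{ord}(G_p))$ since $K(E_p)=0$ for rational double points) correctly fill in the numerical details that the paper leaves to the references.
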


\begin{definition}
\label{D2}
Let $C \subset \mathbb{P}^{2}_{\mathbb{C}}$ be a reduced simply singular curve of even degree $n\geq 6$. Define $m(C):=\sum_{p \in {\rm Sing}(C)}m(p)$. We say that $C$ is an MK-curve if
$$m(C) = \frac{n}{2}(5n-6).$$
\end{definition}
The notion of MK-curves plays a vital role for constructing ball-quotient surfaces and in order to see this, let us present here a general setting, and then we explain the link with the setting of our paper. Let $X$ be a quasi-smooth projective surface and for $p \in {\rm Sing}(X)$ denote by $G_{p}$ the associated small group. Consider $\pi: \tilde{X} \rightarrow X$ the minimal resolution of singularities and $E = \sum_{p \in {\rm Sing}(X)}E_{p}$ the union of the exceptional curves over $p \in {\rm Sing}(X)$. Consider now $D = \sum_{i}D_{i}$ a curve in $\tilde{X}$ with normal crossings and such that the connected components  $D_{i}$ of $D$ can be written as  $D_{i}=\sum D_{ij}$, where $D_{ij}$ are irreducible and distinct. Assume  that ${\rm supp}\, D \cap {\rm supp} \, E = \emptyset$. We are writing $(X,\tilde{X},D,E)$ for such a $4$-tuple. One allows to have $D$ or $E$ as zero divisors.
For such $4$-tuples Kobayashi proved the following result - see \cite[3.2.3 Theorem]{Ivinskis} and \cite{Kob1, Kob2}.

\begin{theorem} 
\label{ball-quotient}
Let $(X,\tilde{X},D,E)$ be a $4$-tuple  as above and assume that the Kodaira dimension of $K_{\tilde{X}}+D+E$ is equal to $2$. Assume furthermore the following conditions
\begin{enumerate}
\item[1)] for each irreducible component $D_{ij}$ of $D$ we have $(K_{\tilde{X}}+D).D_{ij}\geq 0$, and we have strict inequality when $D_{ij}$ is smooth;
\item[2)] there does not exist any smooth rational $(-1)$-curve such that $D.C\leq 1$;
\item[3)] if $C$ is a smooth rational $(-2)$-curve with ${\rm supp}\, C \cap {\rm supp}\, D = \emptyset$, ${\rm supp}\, C \not\subset {\rm supp}\, E$ and $C . E >0$, then there exists a singular point $p \in {\rm Sing}(X)$ with $E_{p}.C > 0$ with the property that $p$ is not a rational double point. 
\end{enumerate}
Then one has the following
\begin{equation}
\label{MKB}
\sum_{p \in {\rm Sing}(X)}m(E_{p}) + \sum_{i}n(D_{i}) \leq 3c_{2}(\tilde{X}) -c_{1}^{2}(\tilde{X}),
\end{equation}
where $n(D_{i}) :=3e(D_{i})+2K_{\tilde{X}}.D_{i}+D_{i}^{2}$, and the equality holds exactly when the universal cover of $\tilde{X} \setminus {\rm supp}(D+E)$ is biholomorphic with $\{(z_{1},z_{2})\in \mathbb{C}^{2} \,:\, |z_{1}|^{2}+|z_{2}|^{2}<1\} \setminus \{ \text{a discrete set of points}\}.$

\end{theorem}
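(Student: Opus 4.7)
The plan is to apply an orbifold Bogomolov--Miyaoka--Yau (BMY) inequality to the log pair $(\tilde X, D+E)$, where the exceptional divisor $E$ encodes the orbifold structure coming from the quotient singularities of $X$ and $D$ plays the role of a log boundary. The hypothesis that the Kodaira dimension of $K_{\tilde X}+D+E$ equals $2$ says that the associated orbifold canonical bundle is big, which is the starting input for any such inequality.

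First I would introduce orbifold (or logarithmic) Chern numbers $\bar c_1^2$ and $\bar c_2$ attached to $(\tilde X, D, E)$. These are obtained from $c_1^2(\tilde X)$ and $c_2(\tilde X)$ by subtracting local contributions at each component of $D$ and at each exceptional configuration $E_p$. A direct local calculation, matching the orbifold discrepancies at a quotient singularity with its numerical invariants and the log discrepancies along an snc boundary, should show that the contributions to $3\bar c_2-\bar c_1^2$ are precisely $n(D_i)=3e(D_i)+2K_{\tilde X}.D_i+D_i^2$ for each boundary component and $m(E_p)=3(e(E_p)-1/\mathrm{ord}(G_p))-K(E_p)^2$ for each exceptional piece. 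Rewriting $\bar c_1^2\le 3\bar c_2$ in terms of $c_1^2(\tilde X)$ and $c_2(\tilde X)$ then gives exactly the stated inequality \eqref{MKB}.

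Next I would verify the hypotheses needed for BMY to apply. In the Miyaoka--Sakai--Kobayashi framework, one needs the orbifold canonical bundle to be nef on the relevant curves, at least after controlled modifications. Condition 1) provides the correct non-negative intersection of $K_{\tilde X}+D$ with smooth components of $D$ (and the right strict inequality at cuspidal components), so that $D$ behaves like a log boundary of general type; condition 2) rules out $(-1)$-curves that would force further contraction and spoil minimality of the pair; condition 3) handles smooth rational $(-2)$-curves meeting $E$, which otherwise could obstruct nefness of the orbifold canonical bundle near $E$. Together these are exactly the hypotheses under which a mixed log--orbifold BMY inequality is valid.

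Finally, for the equality case I would invoke Yau's uniformization theorem in orbifold form: equality in BMY for an orbifold of general type forces the universal cover of the orbifold smooth locus to be the complex two-ball. Here the orbifold smooth locus is $\tilde X\setminus\mathrm{supp}(D+E)$, after deleting the discrete set of points corresponding to possible cusps of $D$ meeting the ball at infinity, which accounts for the ``discrete set of points'' in the statement. The main obstacle is the precise matching of the local orbifold and log discrepancies with the combinatorial quantities $m(E_p)$ and $n(D_i)$, and the verification that conditions 1)--3) are exactly what is needed to run the analytic BMY argument in this generality; most of the actual heavy lifting lies in Kobayashi's analytic work on negatively curved Kähler--Einstein metrics on the open complement and in Sakai's orbifold intersection formalism.
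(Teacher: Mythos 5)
You should first be aware that the paper itself gives no proof of this statement: it is quoted as a theorem of Kobayashi, with the reference \cite[3.2.3 Theorem]{Ivinskis}, and the authors use it as a black box. So there is no internal argument to compare with; your sketch has to stand on its own as a proof, and as such it has a genuine gap. The decisive step in your outline is the appeal to ``a mixed log--orbifold BMY inequality'' that is ``valid exactly under hypotheses 1)--3)'' together with an orbifold uniformization statement for the equality case. But that inequality, with precisely these hypotheses and with the local contributions quantified as $m(E_p)=3(e(E_p)-1/{\rm ord}(G_p))-K(E_p)^2$ and $n(D_i)=3e(D_i)+2K_{\tilde X}.D_i+D_i^2$, \emph{is} the theorem to be proven; it is not an off-the-shelf statement one can cite and then merely ``match discrepancies'' against. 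The substantive content — the construction of a complete negatively curved K\"ahler--Einstein (orbifold) metric on $\tilde X\setminus{\rm supp}(D+E)$ compatible with the quotient-singularity structure, the Chern--Weil computation identifying the boundary corrections with $m(E_p)$ and $n(D_i)$, the role of conditions 1)--3) in guaranteeing that no rational curve obstructs this construction, and the rigidity argument in the equality case — is exactly Kobayashi's work, which your proposal explicitly defers to (``most of the actual heavy lifting lies in Kobayashi's analytic work''). Deferring the heavy lifting to the source of the theorem makes the argument circular rather than a proof.

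Two smaller inaccuracies are worth flagging. First, in condition 1) the strict inequality $(K_{\tilde X}+D).D_{ij}>0$ is required when $D_{ij}$ is \emph{smooth}, not at ``cuspidal components'' as you write. Second, in the equality case the ``discrete set of points'' removed from the ball does not come from cusps of $D$ at infinity; it corresponds to the isolated fixed points of the discrete group $\Gamma\subset{\rm PSU}(2,1)$ lying over the quotient singularities of $X$ (this is spelled out in the paper's remark following the theorem, where $X\setminus\pi({\rm supp}\,D)\cong \Gamma/\mathbb{B}^2$ with $\Gamma$ acting with isolated fixed points). If you want a self-contained treatment, the honest route is either to reproduce Kobayashi's analytic argument (via Ivinskis' thesis) or to derive the statement from a precisely quoted published orbifold BMY theorem, e.g.\ Langer's \cite{Langer}, verifying line by line that its hypotheses are implied by 1)--3) and that its local orbifold Euler numbers reproduce $m(E_p)$ and $n(D_i)$ — neither of which your sketch currently does.
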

In other words, the inequality in Theorem \ref{ball-quotient} holds exactly when there exists a discrete subgroup $\Gamma \subset {\rm PSU}(2,1)$ which acts properly discontinuously on the ball $\mathbb{B}^{2}:=\{(z_{1},z_{2})\in \mathbb{C}^{2} \,:\, |z_{1}|^{2}+|z_{2}|^{2}<1\}$ having only isolated fix points such that $X \setminus \pi({\rm supp}\, D) \cong \Gamma / \mathbb{B}^{2}$. Based on that remark, we can define the following fundamental object of studies.
\begin{definition}
A $4$-tuple $(X, \tilde{X}, D,E)$ is called a ball-quotient provided that $(X,\tilde{X},D,E)$ satisfies the assumptions of Theorem \ref{ball-quotient} and the equality holds in (\ref{MKB}).
\end{definition}

Based on the above considerations, it is worth pointing out that the mentioned result by Hirzebruch and Ivinskis follows from a direct application of Theorem \ref{ball-quotient} to the following setting: $f: X \rightarrow \mathbb{P}^{2}_{\mathbb{C}}$ is the double cover branched along a simply singular curve of degree $n\geq 6$, $\tilde{X}$ is the minimal resolution of singularities of $X$, $D=0$, and $E = \sum_{p \in {\rm Sing}(X)}E_{p}$. 

If we think in the setting of $4$-tuples $(X,\tilde{X},0,E)$, recall that we have $q(\tilde{X})=0$. If now $n=6$, then $\tilde{X}$ is a $K3$ surface. It turns our that MK-sextic curves are special in the sense of constructed algebraic surfaces, and this observation was summed up and presented by Kobayashi in 1985 during his lectures \cite{Koba}.

\begin{theorem} 
Let $X$ be an algebraic surface having only rational double points as singularities and let $\widetilde{X}$ be a $K3$ surface obtained as the  minimal resolution $\pi: \widetilde{X} \rightarrow X$. Then $\sum_{p \in {\rm Sing}(X)}m(p) = 72$ exactly when $X$ is a quotient of the complex two dimensional torus by a finite group of automorphisms of the torus. 
\end{theorem}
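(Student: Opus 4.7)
The plan is to reduce both implications to an equality case of the orbifold Bogomolov-Miyaoka-Yau inequality applied to $X$, regarded as a compact orbifold with rational double point singularities, and then invoke Yau's uniformization theorem in the vanishing first Chern class regime.

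The first step is to recast $\sum_{p} m(p)$ as an orbifold Chern number. Since each singularity of $X$ is a rational double point and hence Gorenstein, one has $K(E_{p})=0$, so the defining formula collapses to $m(p)=3(e(E_{p})-1/|G_{p}|)$. Using that $e(\tilde{X})=24$ for a K3 surface and the standard identity $e_{\mathrm{orb}}(X)=e(\tilde{X})-\sum_{p}(e(E_{p})-1/|G_{p}|)$, a short computation gives
\[
\sum_{p\in\mathrm{Sing}(X)} m(p) \;=\; 72 - 3\, e_{\mathrm{orb}}(X),
\]
so that $\sum m(p)=72$ is equivalent to $e_{\mathrm{orb}}(X)=0$. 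The reverse implication is immediate: if $X=T/G$ is a torus quotient with only rational double points, then $e_{\mathrm{orb}}(X)=e(T)/|G|=0$ and the formula above forces $\sum m(p)=72$. (A quick sanity check with the Kummer surface, where $16$ nodes contribute $16\cdot 3\cdot(2-\tfrac{1}{2})=72$, confirms this.)

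For the forward direction, assume $\sum m(p)=72$, i.e.\ $e_{\mathrm{orb}}(X)=0$. Because the exceptional curves form ADE trees, $K_{\tilde{X}}=\pi^{*}K_{X}=0$ and hence $c_{1}^{2}(X)=0$ in the orbifold sense; combined with $e_{\mathrm{orb}}(X)=0$, this saturates the orbifold BMY inequality $c_{1}^{2}\leq 3c_{2}$ with trivial orbifold canonical class. Yau's uniformization theorem (in its orbifold version) then forces the orbifold universal cover of $X$ to be $\mathbb{C}^{2}$ rather than the ball $\mathbb{B}^{2}$, the ball alternative being incompatible with $K_{X}\equiv 0$. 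Writing $X=\mathbb{C}^{2}/\Gamma$ for a properly discontinuous action of a group $\Gamma$ of complex affine transformations and applying Bieberbach's theorem, the subgroup $\Gamma_{0}\subset\Gamma$ of translations has finite index; the corresponding cover $T:=\mathbb{C}^{2}/\Gamma_{0}$ is then a complex $2$-torus and $G:=\Gamma/\Gamma_{0}$ acts on $T$ by automorphisms with $X=T/G$, as required.

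The main obstacle is the appeal to Yau's orbifold uniformization in this $c_{1}=0$ boundary case. Note that Theorem \ref{ball-quotient} is \emph{not} directly applicable, because its hypothesis that $K_{\tilde{X}}+D+E$ has Kodaira dimension $2$ fails here: with $D=0$ and $E$ a disjoint union of ADE trees on a K3, the divisor $E$ is not big. One must therefore use a direct Kähler-Einstein / uniformization argument, showing that equality in orbifold BMY with trivial canonical class pins the orbifold universal cover down to a flat model (namely $\mathbb{C}^{2}$), from which the torus quotient conclusion follows.
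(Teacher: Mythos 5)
The paper does not prove this statement at all: it is quoted as a result of R.~Kobayashi, with the only reference being his unpublished 1985 Bonn lectures \cite{Koba}, so there is no in-paper argument to compare against. Judged on its own, your proposal is essentially correct and in fact reconstructs what is, historically, Kobayashi's own route (via Einstein--K\"ahler $V$-metrics). The reduction is right: since rational double points are crepant, $K(E_p)=0$ and $m(p)=3\bigl(e(E_p)-1/|G_p|\bigr)$, and together with $e(\tilde{X})=24$ this gives $\sum_p m(p)=72-3e_{\mathrm{orb}}(X)$, so the condition $\sum_p m(p)=72$ is exactly $e_{\mathrm{orb}}(X)=0$; the ``if'' direction then follows from multiplicativity of the orbifold Euler number (here one should note that the quotient map $T\to T/G$ is unramified in codimension one, which follows from $K_X\sim 0$, since a ramification divisor would make $K_X$ non-trivial -- a one-line point, not a gap). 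For the ``only if'' direction, your chain (trivial orbifold canonical class, $e_{\mathrm{orb}}=0$, orbifold Calabi--Yau giving a Ricci-flat $V$-metric, equality $c_2^{\mathrm{orb}}=0$ forcing flatness, developing map giving $X=\mathbb{C}^2/\Gamma$, Bieberbach giving a finite-index lattice of translations and hence $X=T/G$) is the standard and correct argument; the phrase ``equality in orbifold BMY'' is a slightly loose way of saying that a Ricci-flat orbifold metric yields $c_2^{\mathrm{orb}}\ge 0$ with equality only in the flat case, but the mathematical content is right. The one genuinely heavy input, which you correctly isolate as the crux rather than prove, is the existence of the Ricci-flat orbifold K\"ahler--Einstein metric on a surface with isolated quotient singularities and trivial (orbifold) canonical class -- this is precisely R.~Kobayashi's theorem on Einstein--K\"ahler $V$-metrics -- and your remark that Theorem~\ref{ball-quotient} is inapplicable here because $K_{\tilde{X}}+E$ is not big on a K3 is also correct and worth making explicit.
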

Note also that there are essentially three MK-sextics, see Theorem \ref{thmI} due to Ivinskis. 

On the other hand, no example of MK-curve is known in degrees $\geq 8$. It is well-known that if $C$ is a simply singular curve of degree $n\geq 8$, then the associated surface is of general type, and if furthermore $C$ is an MK-curve, then $(X,\tilde{X},0,E)$ is a ball-quotient. 
This brief presentation shows the significance of our search for  MK-curves  in degrees $\geq 8$. 

\section{MK-curves have many singularities}

Here we consider MK-curves in the complex projective plane,  and first we restate definitions regarding these curves using the total Tjurina number of a given reduced curve $C \subset \mathbb{P}^{2}_{\mathbb{C}}$. We use the same notation $A,D,E$ for the singular point $p$, when we look at it as a singular point on $X$ or on $C$. This is convenient since, for instance, the Tjurina numbers are the same, namely $\tau(X,p)=\tau(C,p)$.
\begin{lemma} We have the following formulas on Miyaoka-Kobayashi numbers of singular points:
\label{L1}
$$m(A_k)=3(k+1)-\frac{3}{k+1}  \text{ for } k \geq 1,$$
$$m(D_{k})=3(k+1)- \frac{3}{4(k-2)} \text{ for } k \geq 4,$$
$$m(E_{6})= \frac{167}{8}, \   
m(E_{7})= \frac{383}{16},\text{ and } 
m(E_{8})= \frac{1079}{40}.$$
\end{lemma}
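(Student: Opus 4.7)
The plan is to apply Definition \ref{D1} directly, using the fact that ADE singularities are precisely the rational double points, i.e., quotients $\mathbb{C}^2/G_p$ with $G_p \subset \mathrm{SL}(2,\mathbb{C})$ one of the binary polyhedral groups. Since these are rational double points, the key simplification is that the local canonical divisor $K(E_p)$ vanishes: one has $K_{\tilde X} = \pi^* K_X$ with no discrepancies supported on $E_p$, so $K(E_p)^2 = 0$. The formula therefore collapses to
\begin{equation*}
m(p) = 3\!\left(e(E_p) - \frac{1}{\mathrm{ord}(G_p)}\right).
\end{equation*}

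Next I would tabulate, for each ADE type, the order of the associated finite subgroup of $\mathrm{SL}(2,\mathbb{C})$ and the topological Euler characteristic of the exceptional divisor. The exceptional divisor in each case is a tree of smooth rational $(-2)$-curves whose dual graph is the corresponding Dynkin diagram. A tree of $N$ copies of $\mathbb{P}^1$ meeting transversally at $N-1$ nodes has Euler characteristic $e(E_p) = 2N - (N-1) = N+1$. Specifically:
\begin{itemize}
\item $A_k$: cyclic group of order $k+1$; $N = k$, hence $e(E_p) = k+1$.
\item $D_k$: binary dihedral group of order $4(k-2)$; $N = k$, hence $e(E_p) = k+1$.
\item $E_6$: binary tetrahedral group of order $24$; $N=6$, hence $e(E_p)=7$.
\item $E_7$: binary octahedral group of order $48$; $N=7$, hence $e(E_p)=8$.
\item $E_8$: binary icosahedral group of order $120$; $N=8$, hence $e(E_p)=9$.
\end{itemize}

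Substituting into $m(p) = 3(e(E_p) - 1/\mathrm{ord}(G_p))$ yields each of the claimed equalities after elementary arithmetic. For instance, for $E_8$ we get $m(E_8) = 3(9 - 1/120) = 27 - 1/40 = 1079/40$, and similarly for the other sporadic cases.

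There is no real obstacle beyond quoting the standard McKay correspondence data (orders of the binary polyhedral groups and shapes of the Dynkin resolution graphs) and the vanishing of the discrepancies for Du Val singularities; once these are in place, the result is a line of arithmetic for each of the five families. The only mildly delicate point to state carefully is the convention on $D_k$: the binary dihedral group of order $4(k-2)$ is the one whose McKay quotient has the $D_k$ Dynkin diagram as its minimal resolution graph, which is why the denominator $4(k-2)$ appears rather than $4k$.
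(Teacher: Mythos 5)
Your computation is correct and is exactly the verification the paper leaves implicit: Lemma \ref{L1} is stated without proof, the intended argument being the direct substitution into Definition \ref{D1} using that ADE points are Du Val (so $K(E_p)=0$ and the term $K(E_p)^2$ drops), together with the standard data $e(E_p)=N+1$ for a tree of $N$ rational curves and the orders $k+1$, $4(k-2)$, $24$, $48$, $120$ of the corresponding finite subgroups of ${\rm SL}(2,\mathbb{C})$. Your remark pinning down the $D_k$ convention (order $4(k-2)$, not $4k$) is the one point worth stating explicitly, and you have it right.
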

This result can be easily reformulated as follows.
\begin{lemma}
\label{L2}
For any isolated hypersurface singularity $W$ we denote by $\tau(W)$ the Tjurina number of the singularity. Then we have the following.
$$\frac{m(A_k)}{3}= \tau(A_k)+\epsilon(A_k), \text{ where }\epsilon(A_k)=\frac{k}{k+1}  \text{ for } k \geq 1,$$
$$   \frac{m(D_k)}{3}= \tau(D_k)+\epsilon(D_k), \text{ where }\epsilon(D_k)=\frac{4k-9}{4(k-2)}     \text{ for } k \geq 4,$$
$$\frac{m(E_6)}{3}= \tau(E_6)+\epsilon(E_6), \text{ where }\epsilon(E_6)=\frac{23}{24}, \  
\frac{m(E_7)}{3}= \tau(E_7)+\epsilon(E_7), \text{ where }\epsilon(E_7)=\frac{47}{48} $$ and 
$$\frac{m(E_8)}{3}= \tau(E_8)+\epsilon(E_8), \text{ where }\epsilon(E_8)=\frac{119}{120}.$$
\end{lemma}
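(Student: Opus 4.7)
The plan is essentially bookkeeping: Lemma \ref{L2} is a cosmetic rewriting of Lemma \ref{L1}, so I would deduce it from Lemma \ref{L1} by straightforward algebra after recording the relevant Tjurina numbers. The only input beyond Lemma \ref{L1} is the standard fact that for ADE singularities, which are weighted-homogeneous, the Tjurina number equals the Milnor number; explicitly $\tau(A_k)=k$, $\tau(D_k)=k$, $\tau(E_6)=6$, $\tau(E_7)=7$ and $\tau(E_8)=8$. I would cite these as classical and then just match the numerators.

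Concretely, I would treat the five cases in turn. For $A_k$, Lemma \ref{L1} gives $m(A_k)/3=(k+1)-1/(k+1)$, and then I would write
\[
(k+1)-\tfrac{1}{k+1}=k+\Bigl(1-\tfrac{1}{k+1}\Bigr)=k+\tfrac{k}{k+1}=\tau(A_k)+\epsilon(A_k),
\]
which is the stated formula. For $D_k$ with $k\ge 4$, similarly $m(D_k)/3=(k+1)-1/(4(k-2))$, and collecting
\[
1-\tfrac{1}{4(k-2)}=\tfrac{4(k-2)-1}{4(k-2)}=\tfrac{4k-9}{4(k-2)}
\]
gives $m(D_k)/3=k+\tfrac{4k-9}{4(k-2)}=\tau(D_k)+\epsilon(D_k)$. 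For $E_6,E_7,E_8$, I would just subtract $6$, $7$, $8$ respectively from $167/24$, $383/48$, $1079/120$ to obtain $23/24$, $47/48$, $119/120$.

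There is essentially no obstacle here beyond remembering $\tau=\mu$ for ADE and doing three lines of arithmetic; the point of the lemma is not novelty but to package Lemma \ref{L1} in a form convenient for subsequent sections, where one wants a uniform statement of the shape ``$m(W)/3$ minus $\tau(W)$ is a small correction term $\epsilon(W)\in(0,1)$''. I would conclude the proof by remarking that in every ADE case one has $0<\epsilon(W)<1$, which is the feature that will be exploited later via the du Plessis--Wall estimates and Langer's orbifold BMY inequality.
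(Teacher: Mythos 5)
Your proposal is correct and matches the paper's treatment: the paper gives no separate argument, presenting Lemma \ref{L2} as an immediate reformulation of Lemma \ref{L1} using the standard Tjurina numbers $\tau(A_k)=\tau(D_k)=k$, $\tau(E_6)=6$, $\tau(E_7)=7$, $\tau(E_8)=8$, exactly as you do. Your arithmetic in all five cases checks out, so nothing further is needed.
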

Using Lemma \ref{L2}, Definition \ref{D2} can be formulated as follows.
\begin{corollary}
\label{C1}
Let $C \subset \mathbb{P}^{2}_{\mathbb{C}}$ be a reduced simply singular curve of even degree $n\geq 6$. Define $\tau(C):=\sum_{p \in {\rm Sing}(C)}\tau(C,p)$, the total Tjurina number of $C$, and $\epsilon(C):=\sum_{p \in {\rm Sing}(C)}\epsilon(C,p)$. Then $C$ is an MK-curve if and only if 
$$\tau(C)+\epsilon(C) = \frac{n}{6}(5n-6).$$
In particular, if $C$ is an MK-curve and $s(C)$ denotes the number of singularities of $C$, then
$$\tau(C)+\frac{s(C)}{2} \leq \frac{n}{6}(5n-6) < \tau(C) +s(C).$$
\end{corollary}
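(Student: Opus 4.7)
The plan is to derive the stated equivalence as a direct translation of Definition \ref{D2} through Lemma \ref{L2}, and then to pin down the size of $\epsilon(C)$ by bounding each local contribution $\epsilon(C,p)$ uniformly.

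First I would rewrite the defining equality $m(C)=\frac{n}{2}(5n-6)$ by summing the identities of Lemma \ref{L2} over the singular points of $C$. Since $C$ is simply singular, every $p\in\mathrm{Sing}(C)$ is of type $A_k$, $D_k$, $E_6$, $E_7$ or $E_8$, and in each of these cases Lemma \ref{L2} yields $m(p)/3=\tau(C,p)+\epsilon(C,p)$. Summing, $m(C)/3=\tau(C)+\epsilon(C)$, so the MK-condition $m(C)=\frac{n}{2}(5n-6)$ is equivalent to $\tau(C)+\epsilon(C)=\frac{n}{6}(5n-6)$, which proves the first assertion.

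For the inequality in the ``in particular'' part, the key observation is that each local $\epsilon(C,p)$ satisfies $\frac{1}{2}\leq\epsilon(C,p)<1$ with $\epsilon(C,p)=\frac{1}{2}$ only when $p$ is an $A_1$-node. This is a direct check on the five cases of Lemma \ref{L2}: $\epsilon(A_k)=\frac{k}{k+1}=1-\frac{1}{k+1}$ is increasing in $k\geq 1$ with minimum $\frac{1}{2}$ at $k=1$; $\epsilon(D_k)=\frac{4k-9}{4(k-2)}=1-\frac{1}{4(k-2)}$ is increasing in $k\geq 4$ with minimum $\frac{7}{8}$; and $\epsilon(E_6)=\frac{23}{24}$, $\epsilon(E_7)=\frac{47}{48}$, $\epsilon(E_8)=\frac{119}{120}$ are each strictly less than $1$. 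Summing these bounds over the $s(C)$ singular points of $C$ gives
$$\frac{s(C)}{2}\leq\epsilon(C)<s(C).$$
Combining with the equality $\tau(C)+\epsilon(C)=\frac{n}{6}(5n-6)$ from the first part yields the double inequality in the statement.

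No real obstacle arises here: the whole content is packaged in Lemma \ref{L2}, and the only thing to watch is the trivial case analysis verifying that each $\epsilon$-value lies in $[\tfrac12,1)$. The left inequality is non-strict precisely because $\epsilon(A_1)=\tfrac12$, so equality $\tau(C)+s(C)/2=\frac{n}{6}(5n-6)$ can occur only when $C$ has nodes as its sole singularities; the right inequality is always strict because $\epsilon(C,p)<1$ for every ADE singularity.
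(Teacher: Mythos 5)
Your proposal is correct and follows essentially the same route as the paper: the equivalence is just Definition \ref{D2} rewritten via the identities of Lemma \ref{L2}, and the double inequality comes from the pointwise bounds $\tfrac12\leq\epsilon(C,p)<1$ summed over the singular points, which is exactly the paper's one-line argument (your explicit case check of the five ADE families is a useful but routine verification of that bound).
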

\begin{proof}
The last claim follows from the fact that
$$ \frac{1}{2} \leq \epsilon(C,p) < 1$$
for all singularities $p \in C$.
\end{proof}

Using Definition \ref{D2} and its reformulations above, one has the following results.
Our first result say that an MK-curve has necessarily a large number of singularities.
\begin{theorem}
\label{T1}
 If $C$ is an MK-curve of even degree $n=2m\geq 6$ and $s(C)$ denotes the number of singularities of $C$, then
 $$s(C) > \frac{m^2+3m-3}{3}.$$
 Moreover, if the curve $C$ has only $A_1$ and $A_2$ singularities, then the much stronger inequality
$$s(C) > \frac{7m^2-3m}{9}$$ 
holds. 
\end{theorem}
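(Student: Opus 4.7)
The plan is to leverage the reformulation of the MK-condition from Corollary \ref{C1}, combined with a classical upper bound on the total Tjurina number. Setting $n=2m$, Corollary \ref{C1} gives $\tau(C)+\epsilon(C) = \frac{n(5n-6)}{6} = \frac{10m^2-6m}{3}$. Since $\epsilon(C,p)<1$ for every ADE singularity and an MK-curve necessarily has at least one singularity, summing over singular points yields the strict inequality
\[
\tau(C) + s(C) \;>\; \frac{10m^2-6m}{3}, \qquad (\star)
\]
which serves as the common starting point for both estimates.

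For the first (general) assertion I would invoke the du Plessis--Wall upper bound
\[
\tau(C) \;\le\; (n-1)^2 - \bigl\lfloor (n-1)/2 \bigr\rfloor \bigl\lceil (n-1)/2 \bigr\rceil,
\]
which for $n=2m$ specializes to $\tau(C) \le (2m-1)^2 - m(m-1) = 3m^2 - 3m + 1$. Subtracting this upper bound from $(\star)$ gives immediately
\[
s(C) \;>\; \frac{10m^2-6m}{3} - (3m^2-3m+1) \;=\; \frac{m^2+3m-3}{3},
\]
which is precisely the claimed lower bound.

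For the refined bound under the assumption that $C$ has only $A_1$ and $A_2$ singularities, I would use the trivial estimate $\tau(A_1)=1$ and $\tau(A_2)=2$, hence $\tau(C) \le 2\,s(C)$. Inserting this into $(\star)$ gives $3\,s(C) > (10m^2-6m)/3$, i.e.\ $s(C) > (10m^2-6m)/9$. The elementary inequality $(10m^2-6m)/9 > (7m^2-3m)/9$ is equivalent to $m(m-1)>0$, which holds for all $m\ge 2$ and in particular throughout the range $n=2m\ge 6$; the stated (marginally weaker) bound therefore follows. The only step that requires genuine care is the citation and correct specialization of the du Plessis--Wall upper bound on $\tau(C)$ for even degree $n$; the rest of the argument is essentially arithmetic, and the strictness in $(\star)$ is automatic from $\epsilon(C,p)<1$.
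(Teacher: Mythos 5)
Your first inequality follows the paper's own route (Corollary \ref{C1} plus the bound $\tau(C)\le 3m^2-3m+1$), but the way you justify that bound has a genuine gap: the inequality $\tau(C)\le (n-1)^2-\lfloor (n-1)/2\rfloor\lceil (n-1)/2\rceil$ is \emph{not} an unconditional du Plessis--Wall bound for reduced plane curves. What du Plessis and Wall prove is $\tau(C)\le (n-1)^2-r(n-1-r)$, with a correction term $\binom{2r+2-n}{2}$ when $2r\ge n$, where $r={\rm mdr}(f)$; this is useful only together with a lower bound on $r$. For instance, $n=2m$ concurrent lines have $\tau(C)=(n-1)^2>3m^2-3m+1$, so the bound as you state it fails for general reduced curves. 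The missing ingredient --- and the actual non-trivial input in the paper's proof --- is that a curve of degree $n=2m$ with only ADE singularities satisfies ${\rm mdr}(f)\ge m-1$, which is exactly what the paper imports from the proof of \cite[Theorem 2.9]{DP}; combined with the (refined) du Plessis--Wall inequality this yields $\tau(C)\le 3m^2-3m+1$, and then your arithmetic gives $s(C)>\frac{m^2+3m-3}{3}$ as in the paper. So the first half is correct in outline but incomplete as written: you must either quote the ADE-specific bound or supply the ${\rm mdr}$ estimate.

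Your second half, by contrast, is correct and genuinely different from the paper's argument, and in fact stronger. The paper bounds $\tau(C)$ for curves with only $A_1$ and $A_2$ singularities via the refined du Plessis--Wall inequality with $r_0=\frac{5m}{3}-2$, obtaining $\tau(C)\le\frac{23m^2-15m}{9}$ and hence $s(C)>\frac{7m^2-3m}{9}$; you instead feed the trivial estimate $\tau(C)=n_1+2n_2\le 2s(C)$ into your inequality $(\star)$, getting $3s(C)>\frac{10m^2-6m}{3}$, i.e.\ $s(C)>\frac{10m^2-6m}{9}$, which is strictly larger than $\frac{7m^2-3m}{9}$ for all $m\ge 2$. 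This elementary route avoids the refined du Plessis--Wall machinery entirely and even improves the stated bound (one can sharpen it further using the exact MK relation $9n_1+16n_2=n(5n-6)$ from \eqref{Eq4}, which gives $s(C)\ge\frac{5m^2-3m}{4}$), though of course it exploits the smallness of $\tau$ at $A_1$ and $A_2$ points and does not extend to singularities with large Tjurina number.
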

\proof
We use notation and results recalled in \cite[Section 2]{DP}. It was shown there, in the proof of Theorem 2.9, that the total Tjurina number $\tau(C)$ of a plane curve of degree $n=2m$ having only ${\rm ADE}$ singularities satisfies the inequality
$$\tau(C) \leq 3m(m-1)+1.$$
Using this result, Corollary \ref{C1} implies
$$s(C) > \frac{m}{3}(10m-6)- \tau(C) \geq \frac{m}{3}(10m-6)-3m(m-1)-1=
\frac{m^2+3m-3}{3}.$$
If the curve $C$ has only $A_1$ and $A_2$ singularities then, as in the proof of \cite[Theorem 2.9]{DP}, we get
$$\tau(C) \leq (2m-1)^2-r_0(2m-1-r_0)- \binom{2r_0+2-2m}{2},$$
where
$$r_0=\frac{5m}{3}-2.$$
It follows that
$$\tau(C) \leq \frac{23m^2-15m}{9}$$
and hence as above
$$s(C) > \frac{m}{3}(10m-6)- \tau(C) \geq \frac{m}{3}(10m-6)-\frac{23m^2-15m}{9}=
\frac{7m^2-3m}{9}.$$

\endproof
\begin{example}
\label{E2}
Theorem \ref{T1} applied to sextic curves gives that
$s(C) >5$. In other words, an MK-sextic must have at least 6 singularities. An MK-sextic having only nodes and cusps must have at least 7 singularities.
\end{example}
The following result shows that this bound is rather sharp, namely there are MK-sextics with 7 singularities.
\begin{proposition}
\label{P1}
The only line arrangement which is an MK-curve is the line arrangement which is given in suitable coordinates by the equation
$$(x^2-y^2)(y^2-z^2)(z^2-x^2)=0.$$
\end{proposition}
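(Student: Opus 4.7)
The plan is to proceed in three stages: first restrict which singularities a line arrangement can have, then use two numerical constraints to pin down $n$, $t_2$, and $t_3$, and finally extract the unique projective arrangement.

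The starting observation is that if $C=\bigcup\ell_i$ is simply singular then at each singular point only $k=2$ or $k=3$ of the lines can be concurrent: the local equation $\prod_{i=1}^k\ell_i$ is an $A_1$ for $k=2$ and a $D_4$ for $k=3$, while for $k\ge 4$ its multiplicity exceeds $3$, the maximum multiplicity of any ADE singularity. Let $t_2$ and $t_3$ denote the numbers of these two kinds of singular points.

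Next, I would write down two linear constraints on $(t_2,t_3)$. Counting intersections of lines gives $\binom{n}{2}=t_2+3t_3$, while combining Definition \ref{D2} with the values $m(A_1)=9/2$ and $m(D_4)=117/8$ from Lemma \ref{L1} yields $\tfrac{9}{2}t_2+\tfrac{117}{8}t_3=\tfrac{n(5n-6)}{2}$. Solving this $2\times2$ linear system gives the clean formulas
\[
t_3=\frac{2n(n-3)}{9},\qquad t_2=\frac{n(9-n)}{6}.
\]
Integrality of $t_3$ forces $3\mid n$, while non-negativity of $t_2$ forces $n\le 9$, so among even integers $n\ge 6$ only $n=6$ survives, giving $(t_2,t_3)=(3,4)$.

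It remains to prove uniqueness in degree $6$. For each line $\ell_i$, let $a_i$ and $b_i$ be the numbers of triple points and of nodes on $\ell_i$; since each of the remaining five lines meets $\ell_i$, we have $2a_i+b_i=5$, so $(a_i,b_i)\in\{(2,1),(1,3),(0,5)\}$. The global counts $\sum a_i=3t_3=12$ and $\sum b_i=2t_2=6$ then force $a_i=2$ and $b_i=1$ for every $i$. Consequently each pair among the four triple points is joined by a unique line of $C$, and no three of them are collinear, so the four triple points form a quadruple in general position in $\mathbb{P}^2$. Since ${\rm PGL}(3,\mathbb{C})$ acts transitively on such ordered quadruples, the arrangement is projectively unique, and a direct check shows that $(x^2-y^2)(y^2-z^2)(z^2-x^2)=0$ realizes this combinatorial type (three nodes, four triple points). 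The main delicacy is this last step: the algebra in the first two stages is essentially forced, whereas the combinatorial bookkeeping that rigidly pins the configuration down up to ${\rm PGL}(3,\mathbb{C})$ is where the real content of the proposition lies.
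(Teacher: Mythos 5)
Your proof is correct and its numerical core is exactly the paper's argument: the two linear relations $t_2+3t_3=\binom{n}{2}$ and $\tfrac{9}{2}t_2+\tfrac{117}{8}t_3=\tfrac{n}{2}(5n-6)$, solved together with integrality and non-negativity, force $n=6$, $t_2=3$, $t_3=4$. The only difference is at the end: the paper simply cites as ``known'' that these data determine the arrangement $(x^2-y^2)(y^2-z^2)(z^2-x^2)=0$ up to projective equivalence, whereas you supply a correct self-contained justification (each line carries exactly two of the four triple points, so the arrangement is the complete quadrilateral on four points in general position, unique under ${\rm PGL}(3,\mathbb{C})$), which makes the proposition fully proved rather than partly quoted.
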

\proof
A line arrangement $\mathcal{L}$ has only simple singularities if it has only double and triple points. Let $n_2$ (resp. $n_3$) be the number of double points (resp. triple points) in $\mathcal{L}$. If the total number of lines in $\mathcal{L}$ is $2m$, then we have
$$n_2+3n_3=m(2m-1).$$
If we assume that $\mathcal{L}$ is an MK-curve, we get a new equation involving $n_2$ and $n_3$, namely
$$\frac{3n_2}{2}+\frac{39n_3}{8}=\frac{m(10m-6)}{3}.$$
Solving this system of two equations in $n_2$ and $n_3$, we get
$$n_2=\frac{9m-2m^2}{3}.$$
Since $n_2$ and $n_{3}$ are non-negative integers, it follows that $m=3$, $n_2=3$ and $n_3=4$, and this is the only non-negative integer solution. It is known that these numerical data corresponds to the line arrangement given in suitable coordinates by the equation
$$(x^2-y^2)(y^2-z^2)(z^2-x^2)=0.$$
\endproof

\begin{example}\label{E1}
Now we would like to discuss two additional examples of MK-curves having degree $6$. 
\begin{itemize}
\item It is well-known that a smooth elliptic curve $\mathcal{E}$ has exactly $9$ inflection points and the dual curve to a smooth elliptic curve is an irreducible sextic $\mathcal{S}$ having exactly $9$ simple cusps corresponding to the inflection points. Now we explain that $\mathcal{S}$ is an MK-curve. To this end, recall that for $p \in {\rm Sing}(\mathcal{S})$ we have $m(p)=8$, and thus
$$72 = \frac{n}{2}(5n-6) = \sum_{p \in {\rm Sing}(\mathcal{S})}m(p) = 9\cdot 8 =72.$$
\item There exists an arrangement $\mathcal{C}$ of one smooth conic and $4$ lines such that each line is tangent to the conic, and other singular points are just double intersection points -- we have $6$ such points. Using formulas for MK-numbers, we see that if $p \in \mathcal{C}$ is a double intersection point, then $m(p)=\frac{9}{2}$, and if $q \in {\rm Sing}(\mathcal{C})$ is a tacnode, then $m(q) = 11\frac{1}{4}$. Then we have
$$72 = \frac{n}{2}(5n-6) = \sum_{p\in {\rm Sing}(\mathcal{C})} m(p) = 11\frac{1}{4}\cdot 4 + \frac{9}{2}\cdot 6 = 72.$$
\end{itemize}

\end{example}
It turns out, as it was proved by Ivinskis' in his Master Thesis, the above mentioned $3$ types of MK-sextics are the only examples of MK-curves in degree $6$. 
\begin{definition}
\label{defI}
Let $C \subset \mathbb{P}^{2}_{\mathbb{C}}$ be a reduced sextic curve having only ${\rm ADE}$ singularities.
\begin{enumerate}
\item[(A)] We say that $C$ is of type $\mathfrak{A}$ if it consists of a smooth conic and four lines that these intersect the conic exactly at $4$ distinct points. Note that arrangements of type $\mathfrak{A}$ form one parameter family. 
\item[(B)] We say that $C$ is of type $\mathfrak{B}$ if this is an irreducible sextic curve with exactly $9$ simple cusps, so this is the dual curve to a smooth elliptic curve in $\mathbb{P}^{2}_{\mathbb{C}}$. Note that sextics of type $\mathfrak{B}$ form, up to the projective equivalence, one parameter family.
\item[(C)] We say that $C$ is of type $\mathfrak{C}$ if this is an arrangement of $6$ lines having exactly $4$ triple and $3$ double intersection points.
\end{enumerate}
\end{definition}
\begin{theorem}[Ivinskis]
\label{thmI}
A reduced curve $C \subset \mathbb{P}^{2}_{\mathbb{C}}$ is an MK-sextic if and only if $C$ is of type $\mathfrak{A}, \mathfrak{B}, \mathfrak{C}$.
\end{theorem}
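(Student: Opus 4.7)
The plan is to reduce the classification to a finite Diophantine problem via Corollary \ref{C1}, enumerate the admissible singularity configurations, and then identify each surviving configuration with a known plane sextic.

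First I would apply Corollary \ref{C1} with $n=6$ to rewrite the MK condition as $\tau(C)+\epsilon(C)=24$. The du Plessis--Wall bound used in the proof of Theorem \ref{T1} gives $\tau(C)\leq 19$, hence $\epsilon(C)\geq 5$. Combining this with the pointwise inequalities $\tau(p)\geq 1$ and $\tfrac{1}{2}\leq\epsilon(p)<1$ valid at every ADE singularity yields $6\leq s(C)\leq 16$, and so only finitely many multisets of ADE singularities are a priori possible.

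Next I would carry out the Diophantine enumeration. The denominators of the numbers $\epsilon(A_k)$, $\epsilon(D_k)$, $\epsilon(E_k)$ are rigid (for instance $\epsilon(A_1)=1/2$, $\epsilon(A_2)=2/3$, $\epsilon(A_3)=3/4$, $\epsilon(E_6)=23/24$), so requiring $\epsilon(C)=24-\tau(C)$ to be an integer already severely restricts which ADE types can simultaneously appear. After listing the multisets that meet both this integrality constraint and the equation $\tau(C)+\epsilon(C)=24$, only a short list remains; the three target configurations $9A_2$, $4A_3+6A_1$ and $4D_4+3A_1$ are of course on it, together with a handful of spurious numerical candidates such as $11A_1+2A_3$ or $A_1+6A_3$ that must be discarded geometrically.

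For the realizability step I would then argue case by case. The configuration $4D_4+3A_1$ is handled directly by Proposition \ref{P1}, producing type $\mathfrak{C}$. For $9A_2$ I would invoke the classical Pl\"ucker--Zariski description of cuspidal sextics: such a curve is necessarily irreducible and rational, and is projectively equivalent to the dual of a smooth plane cubic, giving type $\mathfrak{B}$. For $4A_3+6A_1$ I would combine B\'ezout with the local geometry of tacnodes (each $A_3$ forces two smooth branches to share a common tangent line) to show that the sextic must split as a smooth conic together with four of its tangent lines, i.e.\ type $\mathfrak{A}$. The main obstacle is the ruling-out phase for the spurious multisets: for each of them a concrete geometric argument is needed, drawing either on bounds for the number of nodes and cusps of a reduced plane sextic, on the logarithmic BMY inequality for the pair $(\mathbb{P}^2,C)$ applied in Section 5, or on lattice-theoretic obstructions coming from the fact that the minimal resolution of the associated double cover must be a $K3$ surface whose ADE configuration embeds isometrically into the $K3$ lattice.
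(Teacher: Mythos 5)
The paper itself gives no proof of Theorem \ref{thmI}: it refers to \cite[4.8.3 Theorem]{Ivinskis}, whose argument runs through the K3 double covers and lattice-theoretic/case-by-case analysis. So your proposal must stand on its own, and as it stands it is a sensible strategy outline with a genuine gap exactly where the content of the theorem lies. The numerical reduction is fine: Corollary \ref{C1} gives $\tau(C)+\epsilon(C)=24$, the bound $\tau(C)\le 19$ from the proof of Theorem \ref{T1} gives $\epsilon(C)\ge 5$, and $\tau(p)\ge 1$, $\tfrac12\le\epsilon(p)<1$ give $6\le s(C)\le 16$. But the two decisive steps are only gestured at. First, the enumeration of admissible multisets is never carried out, and it is larger than ``a short list'': besides your $11A_1+2A_3$ and $A_1+6A_3$, also $16A_1$ and further mixed configurations satisfy $\tau+\epsilon=24$ with $\tau\le 19$. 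Second, and more importantly, the exclusion of each spurious configuration is left as ``a concrete geometric argument is needed,'' yet some of these exclusions require real work. For instance $A_1+6A_3$ passes every numerical test you impose; to kill it one needs a $\delta$/genus count ($\delta=13$ forces at least four rational components, so splitting type $(3,1,1,1)$ or $(2,2,1,1)$), the quick elimination of the cubic-plus-three-lines case (each line would create extra nodes), and then the nontrivial fact that two conics tangent at two points have no common tangent lines besides the tangents at the contact points (their duals meet only at two double points), so the six tacnodes cannot be assembled. Nothing of this sort is in the proposal, so the theorem is not yet proved.

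There is also a smaller but real misstep in the identification phase: Proposition \ref{P1} classifies MK-curves \emph{among line arrangements}, so it cannot ``handle directly'' the configuration $4D_4+3A_1$; you must first show that a sextic with these singularities is a union of six lines (again a $\delta$ count: $\delta=15$ forces six components), and for type $\mathfrak{C}$ as in Definition \ref{defI} the combinatorial data $4$ triple $+$ $3$ double points then suffices without the projective uniqueness of \ref{P1}. Similarly, the identifications of $9A_2$ with the dual of a smooth cubic and of $4A_3+6A_1$ with a conic plus four tangent lines are correct in substance but only sketched; the latter is cleanest via $\delta=14$, which forces exactly five components, i.e.\ a conic and four lines, each line necessarily tangent to the conic. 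If you complete the enumeration and supply the case-by-case exclusions (or replace them by the lattice-theoretic K3 argument you mention at the end, which is essentially Ivinskis' route), the skeleton you propose does yield the theorem; at present it is a plan rather than a proof.
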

For the proof, we refer to \cite[4.8.3 Theorem]{Ivinskis}.
\begin{remark}
As it was kindly pointed out by the referee, in the case of arrangements coming from families $\mathfrak{A}$ and $\mathfrak{B}$, the Kodaira dimension of the associated $4$-tuples $(X,\tilde{X},0,E)$ is $1$. Furthermore, the associated $K3$ surfaces to arrangements of type $\mathfrak{A}$ (respectively, of type $\mathfrak{B}$) are quotients of abelian surfaces by order $4$ (respectively, by order $3$) symplectic automorphisms.
\end{remark}

\section{MK-curves and dual curves}

Let $C \subset \mathbb{P}^{2}_{\mathbb{C}}$ be an irreducible curve of  degree $n$ having only isolated singularities. Then the dual curve 
$C^{\vee}$ is also irreducible, and its degree $n^{\vee}$ is given by the formula
\begin{equation}
\label{Eq1}
n^{\vee}=n(n-1)-\sum_{p \in {\rm Sing}(C)}(\mu(C,p)+\mult (C,p)-1),
\end{equation}
where $\mu(C,p)$ denotes the Milnor number of the singularity $(C,p)$ and $\mult (C,p)$ denotes its multiplicity, see for instance \cite[Formula (1.2.18)]{Dbook2}. Since $\mult (C,p)\geq 2$ for any singularity, it follows that for a plane curve having only simple singularities we have
\begin{equation}
\label{Eq2}
n^{\vee} \leq n(n-1)-\tau(C)-s(C),
\end{equation}
and equality holds if and only if  $C$ has only $A_k$ singularities.

\begin{corollary}
\label{C10}
Let $C \subset \mathbb{P}^{2}_{\mathbb{C}}$ be an irreducible MK-curve of degree $n=2m$. Then
$$n^{\vee} = \deg (C^{\vee}) <\frac{2m^2}{3}. $$
\end{corollary}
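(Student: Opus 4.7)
The plan is to combine the two ingredients already on the page: the Plücker-type inequality \eqref{Eq2} for the degree of the dual and the Tjurina/singularity lower bound extracted from the MK-condition in Corollary \ref{C1}.

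First I would apply Corollary \ref{C1} to the MK-curve $C$ of degree $n=2m$. Using the strict upper bound $\epsilon(C,p)<1$ at every singularity, the equality
\[
\tau(C)+\epsilon(C)=\frac{n}{6}(5n-6)=\frac{m(10m-6)}{3}
\]
yields the strict inequality
\[
\tau(C)+s(C)>\frac{m(10m-6)}{3}.
\]

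Next, since $C$ is irreducible with only ADE singularities, inequality \eqref{Eq2} gives
\[
n^{\vee}\leq n(n-1)-\tau(C)-s(C)=2m(2m-1)-\bigl(\tau(C)+s(C)\bigr).
\]
Combining the two displays,
\[
n^{\vee}<2m(2m-1)-\frac{m(10m-6)}{3}=\frac{12m^{2}-6m-10m^{2}+6m}{3}=\frac{2m^{2}}{3},
\]
which is the claim. No step is really an obstacle here; the only thing to be careful about is that the inequality $\epsilon(C,p)<1$ holds at every ADE singularity (this is read off Lemma \ref{L2}), so that the bound on $\tau(C)+s(C)$ is strict, which in turn propagates to the strict inequality in the conclusion.
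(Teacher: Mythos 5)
Your proof is correct and is essentially the same as the paper's: both combine the bound $n^{\vee}\leq n(n-1)-\tau(C)-s(C)$ from \eqref{Eq2} with the strict inequality $\tau(C)+s(C)>\frac{n}{6}(5n-6)$ of Corollary \ref{C1} (which, as you note, comes from $\epsilon(C,p)<1$ at each ADE singularity). The arithmetic and the strictness of the final inequality are handled exactly as in the paper.
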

\proof
Using the inequality \eqref{E2} and Corollary \ref{C1}, we have
$$n^{\vee} \leq n(n-1)-\tau(C)-s(C)<2m(2m-1)-\frac{m(10m-6)}{3}=\frac{2m^2}{3}. $$
\endproof
In view of Ivinskis' Theorem \ref{thmI}, in the study of MK-curves we may assume
$n= \deg (C) \geq 8$. 
\begin{proposition}
\label{P10}
Let $C \subset \mathbb{P}^{2}_{\mathbb{C}}$ be an irreducible MK-curve of even degree $n\geq 8$. Assume that $C$ has $n_k$ singularities of type $A_k$ for $k=1,2$ and no other singularities. Then one has
$$n_2-n_1 \geq 20-n.$$
\end{proposition}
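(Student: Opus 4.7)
The plan is to translate the MK-condition into a linear equation in $(n_1, n_2)$, extract a linear inequality in $(n_1, n_2)$ from the degree of the dual curve, and then solve the resulting $2 \times 2$ system.

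First, I would apply Corollary \ref{C1}. Since each $A_1$-point contributes $\tau + \epsilon = 3/2$ and each $A_2$-point contributes $\tau + \epsilon = 8/3$, the MK-equality $\tau(C) + \epsilon(C) = m(10m-6)/3$ rewrites as
\[
9 n_1 + 16 n_2 \;=\; 20m^2 - 12m.
\]

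Next, since $C$ has only $A_k$ singularities, equality holds in \eqref{Eq2}, so
\[
n^\vee \;=\; n(n-1) - 2n_1 - 3n_2 \;=\; 4m^2 - 2m - (2n_1 + 3n_2).
\]
I would then invoke the biduality bound $n^\vee \geq 4$: if on the contrary $n^\vee \leq 3$, then $n = (n^\vee)^\vee \leq n^\vee(n^\vee - 1) \leq 6$, contradicting $n \geq 8$. Hence
\[
2n_1 + 3n_2 \;\leq\; 4m^2 - 2m - 4.
\]

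The last step is a short linear-algebra computation: eliminating $n_1$ from the MK-equality yields $n_2 - n_1 = (25 n_2 - 20m^2 + 12m)/9$, while the combination $9(2n_1+3n_2) - 2(9n_1+16n_2)$ of the two relations collapses to $-5n_2$ on the left and gives a lower bound $5 n_2 \geq 4m^2 - 6m + 36$. Substituting this into the formula for $n_2 - n_1$ produces $n_2 - n_1 \geq (180 - 18m)/9 = 20 - n$, exactly as claimed.

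The only point that requires any thought is the lower bound $n^\vee \geq 4$, which is the main (and essentially the only) place where the hypothesis $n \geq 8$ is used; the numerics are tight, in the sense that replacing it by the trivial bound $n^\vee \geq 2$ would weaken the conclusion to $n_2 - n_1 \geq 18 - n$.
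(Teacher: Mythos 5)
Your proof is correct and follows essentially the same route as the paper: the MK-condition gives $9n_1+16n_2=n(5n-6)$, biduality forces $n^\vee\geq 4$ and hence $2n_1+3n_2\leq n(n-1)-4$, and a linear elimination yields $n_2-n_1\geq 20-n$ (the paper simply multiplies the inequality by $5$ and subtracts the equality, which is the same computation packaged differently). One small slip in your closing aside: with the trivial bound $n^\vee\geq 2$ the same elimination gives $n_2-n_1\geq 10-n$ (in general $n_2-n_1\geq 5c-n$ for the bound $n^\vee\geq c$), not $18-n$; this does not affect the proof itself.
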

\proof
Note that formula \eqref{Eq1} implies that the dual of an irreducible curve of degree $\leq 3$ has degree $\leq 6$.
Hence in our case degree $n^{\vee} = \deg (C^{\vee})$ must be at least $4$ and we get the inequality
\begin{equation}
\label{Eq3}
2n_1+3n_2 \leq n(n-1)-4.
\end{equation}
On the other hand, the fact that $C$ is an MK-curve implies
\begin{equation}
\label{Eq4}
9n_1+16n_2 = n(5n-6).
\end{equation}
If we multiply the inequality \eqref{Eq3} by 5 and subtract the equality 
\eqref{Eq4}, we get
$$n_1-n_2\leq n-20,$$
which is exactly our claim. 
\endproof

\section{On the existence of MK-curves of degree greater than 6}
In this section we elaborate on the existence of MK curves having degree $n>6$. As we have seen in Section $3$, there is a complete classification of MK-sextics and we know a linkage between the geometry of MK-sextics and $K3$ surfaces by a result due to Kobayashi. The case of MK-curves having even degree $\geq 8$ is extremely interesting and important since such curves can lead to examples of ball-quotient surfaces $(X,\tilde{X},0,E)$. However, we are not aware of any example of MK-octic or MK-dodectic! We start with describing the world-record example of an octic curve having the highest known value of $m(C)$.
\begin{example}[Steiner quartic and 4 lines] Let us consider the Steiner quartic curve that is given by the following equation
$$F(x,y,z) = -\frac{1}{4}y^{2}x^{2}-z^{2}(x^{2}+y^{2}-2xy)+x^{2}yz + y^{2}xz.$$
This quartic is an irreducible plane curve having exactly $3$ cusps. We add now four lines, namely one bitangent line to the quartic, we can take the line at infinity $z=0$, and three tangents to cusps - notice that these three lines intersect at the ordinary triple point. Denote by $\mathcal{SQ}$ the resulting arrangement of $4$ lines and the Steiner quartic. We have altogether exactly three $E_7$ singularities, one $D_{4}$, two $A_{3}$, and six $A_{1}$ singular points. Observe that
$$m(\mathcal{SQ}) = 135,9375 < \frac{n}{2}(5n-6) = 136,$$
so $\mathcal{SQ}$ is not any MK-octic, but its value of $m(\mathcal{SQ})$ is the highest-known.
\end{example}
\begin{remark}
In the case of degree $10$ curves, we have the same problem, namely we are not aware of any MK-dodectic curve! 
\end{remark}
We start our discussion with the case of octics having only $A_{1}$, $A_{2}$, $A_{3}$, and $D_{4}$ singularities. Before that happen, we present a general result on reduced plane curves with simple singularities of type $A_{1}, A_{2}, A_{3}$, and $D_{4}$. 

In order to prove such a result, we are going to use Langer's version on the orbifold Bogomolov-Miyaoka-Yau inequality proved in \cite[Section 11.1]{Langer}.
\begin{theorem}[Langer]
\label{langer}
Let $C \subset \mathbb{P}^{2}_{\mathbb{C}}$ be a reduced curve of degree $n$ and assume that $(\mathbb{P}^{2}_{\mathbb{C}},\alpha C)$ is a an effective log canonical pair for a suitably chosen $\alpha\in [0,1]$, then one has
$$\sum_{p \in {\rm Sing}(C)} 3\bigg(\alpha(\mu_{p}-1)+1-e_{orb}(p,\mathbb{P}^{2}_{\mathbb{C}},\alpha C)\bigg)\leq (3\alpha - \alpha^{2})n^{2}-3\alpha n,$$
where ${\rm Sing}(C)$ denotes the set of all singular points, $\mu_{p}$ is the Milnor number of a singular point $p$, and $e_{orb}$ denotes the local orbifold Euler number of $p$.
\end{theorem}
\begin{theorem}
\label{BMYoct}
Let $C \subset \mathbb{P}^{2}_{\mathbb{C}}$ be a reduced curve of degree $n \geq 8$ having $n_{1}$ singularities of type $A_{1}$, $n_{2}$ singularities of type $A_{2}$, $n_{3}$ singularities of type $A_{3}$, and $d_{4}$ singularities of type $D_{4}$. Then
$$468n_{1} + 814n_{2}+1128n_{3}+1485d_{4} \leq 252n^{2}-288n.$$
\end{theorem}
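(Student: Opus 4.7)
The plan is to invoke Langer's orbifold Bogomolov--Miyaoka--Yau inequality from \cite{Langer} applied to the log pair $(\mathbb{P}^2, \alpha C)$ for a suitable rational weight $\alpha \in (0, 1]$. Schematically, this inequality reads
$$(K_{\mathbb{P}^2} + \alpha C)^2 \leq 3\, e_{\text{orb}}(\mathbb{P}^2, \alpha C),$$
where the orbifold Euler number on the right decomposes into a global topological term that depends only on $n$ and $\alpha$, together with a sum of local contributions at the singular points of $C$; each local contribution is an explicit rational function of $\alpha$ depending only on the analytic type of the singularity.

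First I would expand the left-hand side using $K_{\mathbb{P}^2} = -3H$ and $\deg C = n$ to obtain $(\alpha n - 3)^2$. Next I would compute the local orbifold contributions for each of the singularity types $A_1, A_2, A_3$ and $D_4$ via the minimal embedded resolution: the standard chain of $k$ rational $(-2)$-curves for $A_k$, and the four-vertex Dynkin configuration for $D_4$. Reading off the log discrepancies together with the intersection data yields closed-form rational functions $F_\alpha(A_k)$ and $F_\alpha(D_4)$ in $\alpha$. Combined with the global Euler-characteristic term, this produces an inequality of the shape
$$F_\alpha(A_1)\, n_1 + F_\alpha(A_2)\, n_2 + F_\alpha(A_3)\, n_3 + F_\alpha(D_4)\, d_4 \leq P(\alpha)\, n^2 + Q(\alpha)\, n + R(\alpha).$$

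The main obstacle is the calibration of $\alpha$: one must select the rational weight so that, after clearing denominators by a single common factor, the four local coefficients align precisely with $468$, $814$, $1128$, $1485$, while simultaneously $P(\alpha)$ and $Q(\alpha)$ produce the right-hand side polynomial $252 n^2 - 288 n$ and the residual constant $R(\alpha)$ is non-positive or can be absorbed using $n \geq 8$. This is a one-parameter search whose success is guaranteed by the specific integer coefficients stated in the theorem. Once $\alpha$ is fixed, the remaining steps are routine bookkeeping with the tabulated orbifold data of the ADE singularities $A_1, A_2, A_3, D_4$, and the inequality follows.
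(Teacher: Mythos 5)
Your proposal is essentially the paper's own argument: Langer's orbifold BMY inequality applied to the pair $(\mathbb{P}^2,\alpha C)$ with the tabulated local orbifold Euler numbers of $A_1,A_2,A_3,D_4$, and the calibration you describe does succeed, with $\alpha=\frac{3}{8}$ and an overall factor $256$ reproducing the coefficients $468,814,1128,1485$ and the right-hand side $252n^2-288n$. One small correction: there is no residual constant to absorb, since the constant terms cancel identically once the inequality is rewritten as $\sum_{p}3\bigl(\alpha(\mu_p-1)+1-e_{orb}(p,\mathbb{P}^2,\alpha C)\bigr)\leq(3\alpha-\alpha^2)n^2-3\alpha n$; the hypothesis $n\geq 8$ is used instead to guarantee $\frac{3}{n}\leq\alpha=\frac{3}{8}\leq\frac{2}{3}$, i.e.\ that the pair is effective and log canonical so that Langer's inequality is applicable.
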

\begin{proof}
 Consider the pair $(\mathbb{P}^{2}_{\mathbb{C}}, \alpha C)$, where $C$ has only singularities of type $A_{1},A_{2},A_{3}, D_{4}$. Recall that for our singularities we have the following data:
\begin{table}[h]
\centering
\begin{tabular}{c|c|c|c}
 Singularity Type   & $\mu_{p}$ & $e_{orb}(p,\mathbb{P}^{2}_{\mathbb{C}},\alpha C)$ & $\alpha$ \\ \hline \hline
$A_{1}$ & 1 & $(1-\alpha)^{2}$ & $0 < \alpha \leq 1$ \\
$A_{2}$ & 2 & $\frac{(5-6\alpha)^{2}}{24}$ & $1/6 \leq \alpha \leq 5/6$  \\
$A_{3}$ & 3 & $\frac{(3-4\alpha)^{2}}{8}$ & $1/4 \leq \alpha \leq 3/4$  \\
$D_{4}$ & 4 & $\frac{(2-3\alpha)^{2}}{4}$ & $0<\alpha \leq 2/3$.

\end{tabular}
\end{table}

Since our pair has to be effective and log canonical, one should have $\alpha \in [3/n, 2/3]$. Since $n\geq 8$, let us take $\alpha=\frac{3}{8}$. We are going to use Theorem \ref{langer} for the pair 
$(\mathbb{P}^{2}_{\mathbb{C}}, \frac{3}{8}C)$, namely
\begin{gather*}
\sum_{p \in {\rm Sing}(C)}3\bigg(\frac{3}{8} (\mu_{p}-1)+1 - e_{orb}\bigg(p, \mathbb{P}^{2}_{\mathbb{C}},\frac{3}{8} C \bigg)\bigg) \leq \frac{63}{64}n^{2} - \frac{9}{8} n. 
\end{gather*}
We start with the left-hand side. One has
\begin{gather*}3n_{1}\bigg(\frac{3}{8}(1-1)+1-\frac{25}{64}\bigg) + 3n_{2}\bigg(\frac{3}{8}(2-1)+1-\frac{121}{384}\bigg) + 3n_{3}\bigg(\frac{3}{8}(3-1)+1-\frac{9}{32}\bigg)  \\ + 3d_{4}\bigg(\frac{3}{8}(4-1)+1-\frac{49}{256}\bigg) = \frac{117}{64}n_{1} + \frac{407}{128}n_{2} + \frac{141}{32}n_{3} + \frac{1485}{256}d_{4}.
\end{gather*}
Then
$$ \frac{117}{64}n_{1} + \frac{407}{128}n_{2} + \frac{141}{32}n_{3} + \frac{1485}{256}d_{4} \leq  \frac{63}{64}n^{2}-\frac{9}{8}n.$$
Multiplying by $256$ we obtain the desired inequality, which completes the proof.
\end{proof}
Now we focus on the existence of MK-octics with $A_{1},A_{2},A_{3}$, and $D_{4}$ singularities. If $C \subset \mathbb{P}^{2}_{\mathbb{C}}$ is such an octic, then we have
\[\begin{cases}
36n_{1} + 64n_{2} + 90n_{3}+117d_{4} = 8\cdot m(10m-6) = 8\cdot 136 = 1088, \\
n_{1}+2n_{2}+3n_{3}+4d_{4} \leq 3m(m-1)+1 = 37,\\
n_{1},n_{2},n_{3},d_{4} \in \mathbb{Z}_{\geq 0}.
\end{cases}\]
It turns our that the above system has exactly $8$ solutions in non-negative integers, namely
\begin{enumerate}
\item[$a)$] $(n_{1},n_{2},n_{3},d_{4})=(0,17,0,0)$,
\item[$b)$] $(n_{1},n_{2},n_{3},d_{4})=(1,8,6,0)$,
\item[$c)$] $(n_{1},n_{2},n_{3},d_{4})=(2,8,3,2)$,
\item[$d)$] $(n_{1},n_{2},n_{3},d_{4})=(3,8,0,4)$,
\item[$e)$] $(n_{1},n_{2},n_{3},d_{4})=(6,8,4,0)$,
\item[$f)$] $(n_{1},n_{2},n_{3},d_{4})=(7,8,1,2)$,
\item[$g)$] $(n_{1},n_{2},n_{3},d_{4})=(11,8,2,0)$,
\item[$h)$] $(n_{1},n_{2},n_{3},d_{4})=(16,8,0,0)$.
\end{enumerate}
In the first step, we can automatically exclude case $a)$. It follows from Zariski's theorem that the maximal possible number of cusps for an octic is less than $16$, see \cite{Zar}. Then, using inequality proved in Theorem \ref{BMYoct}, we can exclude the cases $d)$, $e)$, $f)$, $g)$, and $h)$ - this is a straightforward check. It remains to check whether the cases $b)$ and $c)$ can be geometrically realized by a reduced plane curve of degree $8$. In order to decide whether the case $b)$ can hold, we mimic our argument as in Theorem \ref{BMYoct} for $\alpha=\frac{55}{100}$ obtaining the following result (that we treat as a corollary, mostly in order to avoid repetitions).
\begin{corollary}
In the setting of Theorem \ref{BMYoct}, we have
\begin{equation}
\label{bmyb}
3828n_{1}+6862n_{2}+9696n_{3}+12573d_{4} \leq 2156n^2-2640n.
\end{equation}
\end{corollary}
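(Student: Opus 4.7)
The plan is to apply Langer's orbifold Bogomolov--Miyaoka--Yau inequality to the pair $(\mathbb{P}^2_{\mathbb{C}}, \alpha C)$ following the exact template of the proof of Theorem~\ref{BMYoct}, but now with the choice $\alpha = \tfrac{55}{100} = \tfrac{11}{20}$ in place of $\tfrac{3}{8}$. First I would verify that $\tfrac{11}{20}$ is an admissible weight, i.e.\ that the pair remains effective and log-canonical: the local admissibility windows from the table in Theorem~\ref{BMYoct} are $A_1\colon (0,1]$, $A_2\colon [1/6,5/6]$, $A_3\colon [1/4,3/4]$, $D_4\colon (0,2/3]$, and $\tfrac{11}{20}$ lies in each of them; the global condition $\alpha \in [3/n,2/3]$ is automatic since $n \geq 8$ gives $3/n \leq 3/8 < \tfrac{11}{20} \leq \tfrac{2}{3}$.

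Next, I would evaluate the four orbifold Euler characteristics at $\alpha = \tfrac{11}{20}$, getting $(1-\alpha)^2 = \tfrac{81}{400}$ for $A_1$, $(5-6\alpha)^2/24 = \tfrac{289}{2400}$ for $A_2$, $(3-4\alpha)^2/8 = \tfrac{2}{25}$ for $A_3$, and $(2-3\alpha)^2/4 = \tfrac{49}{1600}$ for $D_4$. Plugging these into the left-hand side of the orbifold BMY inequality
\[
\sum_{p \in {\rm Sing}(C)} 3\bigl(\alpha(\mu_p-1)+1-e_{orb}(p,\mathbb{P}^2_{\mathbb{C}},\alpha C)\bigr) \leq (3\alpha-\alpha^2)n^2 - 3\alpha n
\]
produces a weighted sum in $n_1,n_2,n_3,d_4$ with rational coefficients, while the right-hand side becomes $\tfrac{539}{400}\,n^2 - \tfrac{33}{20}\,n$.

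Finally, I would clear denominators by multiplying both sides through by $1600$, the least common multiple of the denominators occurring in the four per-singularity contributions as well as on the right-hand side. Reading off the resulting integer coefficients yields the claimed inequality. The conceptual content is identical to that of Theorem~\ref{BMYoct}, which is precisely why the statement is packaged as a corollary; the only real hurdle is careful arithmetic in the four parallel fraction manipulations and in keeping track of the common scaling factor $1600$.
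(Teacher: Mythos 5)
Your proposal is exactly the paper's own argument: the authors state that the corollary is obtained by repeating the proof of Theorem \ref{BMYoct} with $\alpha=\tfrac{55}{100}=\tfrac{11}{20}$, and your admissibility check, orbifold Euler numbers and right-hand side $\tfrac{539}{400}n^2-\tfrac{33}{20}n$ are all correct. One caveat: carrying out your final step actually gives the $A_1$ coefficient $3\bigl(1-\tfrac{81}{400}\bigr)\cdot 1600=3828$, not $3228$, so "reading off" does not reproduce the stated inequality verbatim; since $n_1\geq 0$, your (stronger) inequality with $3828\,n_1$ still implies the one in the statement, and the $3228$ there appears to be a misprint --- indeed the stronger version with $3828$ is what is needed afterwards to exclude the case $(n_1,n_2,n_3,d_4)=(1,8,6,0)$ for $n=8$.
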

Now using (\ref{bmyb}) we can exclude the case $b)$ so the last remaining step is to decide on the case $c)$. Again, we mimic our argument as in Theorem \ref{BMYoct} for $\alpha=\frac{48}{100}$ obtaining the following result.
\begin{corollary}
In the setting of Theorem \ref{BMYoct}, we have
\begin{equation}
\label{bmyc}
10944n_{1}+19391n_{2}+27213n_{3}+35424d_{4} \leq 6048n^2-7200n.
\end{equation}
\end{corollary}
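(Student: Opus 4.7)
The plan is to repeat the argument of Theorem \ref{BMYoct} verbatim, changing only the value of the coefficient $\alpha$ at which one evaluates Langer's orbifold BMY inequality. There, the choice $\alpha=3/8$ was made; here, I would set $\alpha=12/25$ (i.e.\ $48/100$). The inequality one applies is again
\begin{equation*}
\sum_{p\in\mathrm{Sing}(C)}3\Bigl(\alpha(\mu_p-1)+1-e_{orb}(p,\mathbb{P}^2_{\mathbb{C}},\alpha C)\Bigr)\le(3\alpha-\alpha^2)n^2-3\alpha n.
\end{equation*}

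Before substituting, I would confirm that $\alpha=12/25$ is admissible. Since $n\ge 8$, the log canonicity/effectiveness condition $\alpha\in[3/n,2/3]$ gives $\alpha\in[3/n,2/3]\supseteq[3/8,2/3]$, which contains $12/25=0.48$. For the individual ADE singularities one checks: for $A_1$, $0<12/25\le 1$; for $A_2$, $1/6\le 12/25\le 5/6$; for $A_3$, $1/4\le 12/25\le 3/4$; for $D_4$, $0<12/25\le 2/3$. All four orbifold Euler numbers in the table of Theorem \ref{BMYoct} are therefore valid at this value of $\alpha$.

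Next I would substitute $\alpha=12/25$ into the four local contributions. Using the table one gets
\begin{equation*}
e_{orb}(A_1)=\left(\tfrac{13}{25}\right)^{2},\ e_{orb}(A_2)=\tfrac{(53/25)^2}{24},\ e_{orb}(A_3)=\tfrac{(27/25)^2}{8},\ e_{orb}(D_4)=\tfrac{(14/25)^2}{4},
\end{equation*}
so the coefficients $3\bigl(\alpha(\mu_p-1)+1-e_{orb}(p)\bigr)$ become $1368/625$, $19391/5000$, $27213/5000$, and $4428/625$ respectively, while the right-hand side evaluates to $(756/625)n^2-(36/25)n$. Clearing denominators by multiplying the whole inequality by $5000$ converts the four coefficients into $10944$, $19391$, $27213$, $35424$ and the right-hand side into $6048n^2-7200n$, which is exactly \eqref{bmyc}.

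The only non-automatic step is the arithmetic of reducing the rational coefficients after substitution, which is a routine common-denominator computation; there is no real obstacle, and the choice $\alpha=12/25$ is made precisely so that the single common multiplier $5000$ turns all five rational numbers into the integers appearing in the statement.
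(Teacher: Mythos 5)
Your proposal is correct and follows exactly the paper's own route: the paper likewise obtains \eqref{bmyc} by rerunning the proof of Theorem \ref{BMYoct} with $\alpha=\tfrac{48}{100}=\tfrac{12}{25}$ in Langer's orbifold BMY inequality, and your local coefficients $\tfrac{1368}{625},\tfrac{19391}{5000},\tfrac{27213}{5000},\tfrac{4428}{625}$, the right-hand side $\tfrac{756}{625}n^2-\tfrac{36}{25}n$, and the clearing of denominators by $5000$ all check out.
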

Using (\ref{bmyc}) we can exclude the case $c)$. The above discussion leads us to the following result.
\begin{theorem}
\label{thmn=8}
There does not exist any MK-octic with $A_{1}, A_{2},A_{3}$, and $D_{4}$ singularities. 
\end{theorem}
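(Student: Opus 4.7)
The plan is to combine the MK equation from Corollary \ref{C1} with the universal Tjurina bound of Theorem \ref{T1}, specialize everything to $n=8$, enumerate all candidate singularity profiles, and then rule out each one in turn using Zariski's classical bound on the number of cusps of an octic together with several carefully tuned instances of Langer's orbifold BMY inequality.

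First I would write down the system
\[
36n_1+64n_2+90n_3+117d_4 = 1088, \qquad n_1+2n_2+3n_3+4d_4 \leq 37,
\]
and record that its only non-negative integer solutions are the eight tuples $a)$--$h)$ displayed above. Case $a)$, an octic with seventeen cusps, is immediately incompatible with Zariski's bound \cite{Zar}. For cases $d),e),f),g),h)$ I would substitute $n=8$ into Theorem \ref{BMYoct}, so that the inequality becomes $468n_1+814n_2+1128n_3+1485d_4 \leq 13824$, and check arithmetically that each of the five tuples violates it. These checks are routine, although case $e)$ only fails the bound by a small margin, which already suggests that the remaining two profiles will be borderline.

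The main obstacle is cases $b)=(1,8,6,0)$ and $c)=(2,8,3,2)$: at $\alpha=3/8$ the corresponding weighted sums equal $13748$ and $13802$, both below $13824$, so Theorem \ref{BMYoct} as stated is not sharp enough. To eliminate them I would rerun Langer's orbifold BMY inequality for $(\mathbb{P}^2,\alpha C)$ with a different admissible coefficient. The simultaneous admissible range for the four singularity types $A_1,A_2,A_3,D_4$ together with the effectivity and log canonicality condition $\alpha\in[3/n,2/3]$ is $[3/8,2/3]$. The technical heart of the argument is to choose, for each of the two profiles separately, an $\alpha$ in this interval that makes the orbifold contribution of the $A_2, A_3$ and $D_4$ points large enough to force a contradiction: the choices $\alpha = 55/100$ and $\alpha = 48/100$ yield the sharpened inequalities \eqnref{bmyb} and \eqnref{bmyc}, which rule out cases $b)$ and $c)$ respectively. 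Once all eight profiles have been excluded, the theorem follows.
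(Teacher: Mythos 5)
Your proposal is correct and follows essentially the same route as the paper: the same Diophantine system and eight candidate tuples, Zariski's bound for case $a)$, the $\alpha=3/8$ inequality of Theorem \ref{BMYoct} for cases $d)$--$h)$, and the two extra runs of Langer's inequality at $\alpha=55/100$ and $\alpha=48/100$ (i.e.\ \eqnref{bmyb} and \eqnref{bmyc}) for the borderline cases $b)$ and $c)$. One small arithmetic point worth recording: the $\alpha=55/100$ computation actually gives the coefficient $3828$ for $n_1$ (the value $3228$ printed in \eqnref{bmyb} is a typo), and with $3828$ the tuple $(1,8,6,0)$ gives $116900>116864$ at $n=8$, so case $b)$ is indeed excluded exactly as you claim.
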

Based on the discussion presented above, we can observe the following corollary.
\begin{corollary}
\label{corn=10}
There is no MK-dodectic curve having nodes, tacnodes, and ordinary triple points. 
\end{corollary}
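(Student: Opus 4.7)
The plan is to adapt the Diophantine analysis of the octic case to the degree $n=10$ setting, where (as flagged in the sentence just before the corollary, and consistent with the label \ref{corn=10}) the word \emph{dodectic} is being used for curves of this degree. Write $n_{1}$, $n_{3}$, $d_{4}$ for the numbers of nodes, tacnodes, and ordinary triple points on $C$, i.e.\ the counts of $A_{1}$, $A_{3}$, $D_{4}$ singularities. Substituting the explicit values $m(A_{1})=9/2$, $m(A_{3})=45/4$, $m(D_{4})=117/8$ from Lemma \ref{L1} into Definition \ref{D2} and multiplying through by $8$ converts the MK identity $m(C)=\tfrac{n}{2}(5n-6)$ into the integer equation
\[
36\,n_{1} + 90\,n_{3} + 117\,d_{4} \;=\; 8 \cdot \tfrac{10}{2}(5 \cdot 10 - 6) \;=\; 1760.
\]

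The decisive step is a congruence observation. Each coefficient on the left-hand side is a multiple of $9$, since $36=9\cdot 4$, $90=9\cdot 10$, and $117=9\cdot 13$. Therefore the left-hand side lies in $9\mathbb{Z}$ for any nonnegative integer triple $(n_{1},n_{3},d_{4})$, whereas $1760 = 9\cdot 195 + 5$, so $1760 \equiv 5 \pmod{9}$. This immediate arithmetic contradiction rules out every configuration at once, with no need to invoke the Tjurina bound from \cite{DP} or Langer's orbifold BMY inequality.

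The conceptual reason the shortcut is available for the set $\{A_{1}, A_{3}, D_{4}\}$ is that the MK numbers of these three types all have denominators dividing $8$ and numerators divisible by $9$, so that after clearing the $8$ the common factor $9$ survives on the left-hand side. Admitting the cusp type $A_{2}$ (for which $m(A_{2})=8$) would introduce a coefficient $64$ and destroy the divisibility, since $\gcd(36,64,90,117)=1$; this is exactly why the octic analysis in Theorem \ref{thmn=8}, which allows $A_{2}$ singularities as well, had to appeal to Langer's inequality at several carefully chosen weights. The only point requiring a quick check is the divisibility of each coefficient by $9$, a direct consequence of the MK-number formulas in Lemma \ref{L1}.
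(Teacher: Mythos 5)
Your proposal is correct and follows the same route as the paper: reduce the MK condition for $n=10$ to the Diophantine equation $\tfrac{9}{2}n_1+\tfrac{45}{4}n_3+\tfrac{117}{8}d_4=220$ (equivalently $36n_1+90n_3+117d_4=1760$) and observe it has no non-negative integer solutions. The paper merely asserts the non-solvability, whereas you supply the clean reason (the left-hand side is divisible by $9$ while $1760\equiv 5 \pmod 9$), so your write-up is if anything more complete.
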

\begin{proof}
If follows from the fact that the Diophantine equation 
$$\frac{9}{2}n_{1} + 11\frac{1}{4}n_{3}+14\frac{5}{8}d_{4} = 220$$
does not have non-negative integer solutions.
\end{proof}

\begin{proposition}
\label{P11}
Let $C \subset \mathbb{P}^{2}_{\mathbb{C}}$ be an irreducible MK-curve of even degree $n=2m\geq 6$. Assume that $C$ has $n_k$ singularities of type $A_k$ for $k=1,2$ and no other singularities. Then one has
$$n_2> \frac{7m^2-21m}{18}+10.$$
In particular, there is no such curve if $n\in \{8,10,12\}$.
\end{proposition}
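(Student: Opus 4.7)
The strategy is to combine the sharper singularity-count bound from Theorem~\ref{T1} (in its $A_1,A_2$-only form) with the dual-curve inequality from Proposition~\ref{P10}: after eliminating $n_1$ between the two, the claimed lower bound on $n_2$ falls out. The subtlety is that Proposition~\ref{P10} requires $n\geq 8$ (its proof uses $n^{\vee}\geq 4$), and in fact for $n=6$ the type-$\mathfrak{B}$ dual-of-elliptic sextic has $n_1=0,\ n_2=9$ and violates both Proposition~\ref{P10} and the inequality we are trying to prove; so the hypothesis should really be read as $n\geq 8$, the case $n=6$ being covered by Theorem~\ref{thmI}.

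Assuming $n=2m\geq 8$, Theorem~\ref{T1} applied in the $A_1,A_2$-only case gives $n_1+n_2=s(C)>(7m^2-3m)/9$, while Proposition~\ref{P10} gives $n_1-n_2\leq 2m-20$. Subtracting the second from the first,
\[
2n_2 \;=\; (n_1+n_2)-(n_1-n_2) \;>\; \frac{7m^2-3m}{9}-(2m-20) \;=\; \frac{7m^2-21m+180}{9},
\]
which rearranges to $n_2>(7m^2-21m)/18+10$, as claimed.

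For the \emph{in particular} clause, I would treat $m\in\{4,5,6\}$ separately. In each case the lower bound just obtained, the MK equation $9n_1+16n_2=20m^2-12m$, the non-negativity $n_1\geq 0$, the congruence $16n_2\equiv 20m^2-12m\pmod 9$, and Proposition~\ref{P10} pin down a \emph{unique} numerical candidate: $(n_1,n_2)=(0,17),\ (8,23),\ (8,36)$ for $m=4,5,6$ respectively. The main obstacle is ruling out these three configurations, since the continuous Tjurina bound $(23m^2-15m)/9$ is within one unit of being achieved by each of them. The plan here is to invoke Theorem~\ref{BMYoct} specialised to $n_3=d_4=0$, namely $468n_1+814n_2\leq 252n^2-288n$: a straightforward arithmetic check shows that substituting each of the three pairs gives a LHS that strictly exceeds the RHS (by $14,\ 146,\ 216$ respectively), so none of them is geometrically realisable, which finishes the proof.
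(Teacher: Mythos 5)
Your argument is correct, and the first half is exactly the paper's: combine Theorem \ref{T1} (the $A_1,A_2$ form) with Proposition \ref{P10} and eliminate $n_1$ to get $2n_2>\frac{7m^2-3m}{9}+20-2m$. Your caveat about $n=6$ is a genuine and valid observation: the paper states the proposition for $n\geq 6$ but its proof (like yours) invokes Proposition \ref{P10}, which needs $n\geq 8$, and indeed the type-$\mathfrak{B}$ nine-cuspidal sextic has $n_2=9\not>10$, so the hypothesis should be read as $n\geq 8$. Where you genuinely diverge is the ``in particular'' clause. The paper treats the three degrees with three different tools: for $m=4$ it kills $(n_1,n_2)=(0,17)$ by Zariski's theorem \cite{Zar}; for $m=5$ it uses the Greuel--Shustin table \cite{GS} together with Proposition \ref{P10} (note the paper's text reports $n_2=27$ for $n_1=8$, whereas the correct value from \eqref{Eq4} is $23$, as you have); for $m=6$ it derives a new orbifold BMY inequality with $\alpha=\tfrac14$, namely $42n_1+71n_2\leq 22n^2-24n$, to exclude $(8,36)$. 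You instead identify the same unique candidates $(0,17)$, $(8,23)$, $(8,36)$ (your congruence bookkeeping and the use of Proposition \ref{P10} to discard $(24,14)$, $(24,27)$, $(40,18)$ are all consistent with the paper) and then rule out all three uniformly with Theorem \ref{BMYoct} specialized to $n_3=d_4=0$; the arithmetic checks out ($468n_1+814n_2$ exceeds $252n^2-288n$ by $14$, $146$, $216$ respectively, and $\alpha=\tfrac38$ is admissible for all $n\geq 8$). Your route is more self-contained, avoiding the external citations to Zariski and Greuel--Shustin and the extra $\alpha=\tfrac14$ computation, while the paper's references make the geometric obstructions (maximal cusp counts) more visible; both are sound.
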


\proof
Using Theorem \ref{T1} and Proposition \ref{P10}, we have
$$2n_2 > \frac{7m^2-3m}{9}+20-2m$$
which is our first claim. 

When $m=4$, this gives $n_2 \geq 12$.
Note that
when $n=8$, equation \eqref{Eq4} implies that $n_1$ is divisible by 16.
If $n_1=0$, then equation \eqref{Eq4} implies that $n_2=17$, which is impossible due to Zariski's theorem \cite{Zar}. If $n_1=16$, then equation \eqref{Eq4} implies that $n_2=8$, impossible by our bound above. The cases $n_1 >16$ are discarded by the same argument.

When $m=5$, our bound gives $n_2 \geq 14$. Equation \eqref{Eq4} implies that $n_1=8(2k+1)$, with $k$ being a positive integer. When $n_1=8$,
equation \eqref{Eq4} implies that $n_2=27$, which is impossible, see \cite[Table on p. 25]{GS}. If $n_1=24$, then equation \eqref{Eq4} implies that $n_2=14$, a contradiction with Proposition \ref{P10}.
The cases $n_1 >24$ are discarded by the same argument or using the inequality $n_2 \geq 14$. \\
The same approach as above shows that when $m=6$ the only possibility would  be
$$ n_1=8 \text{ and } n_2=36.$$
Applying the Bogomolov-Miyaoka-Yau inequality for the pair $(\mathbb{P}^{2}_{\mathbb{C}},\frac{1}{4}C)$, where $C$ is a reduced curve of degree $n\geq12$ with only $A_{1}$ and $A_{2}$ singularities, we arrive at the inequality
\begin{equation}
42n_{1} + 71n_{2} \leq 22n^{2}-24n.
\end{equation}
Using this inequality for $n=12$, $n_{1}=8$, and $n_{2}=36$ we get a contradiction. 
\endproof

The above techniques can be used to show the non-existence of MK-curves having a single type of singular points. The following results are easy consequences of Theorem \ref{BMYoct}.

\begin{corollary}
\label{corA1}
Let $C \subset \mathbb{P}^{2}_{\mathbb{C}}$ be a  curve of even degree $n \geq 8$ which is nodal, i.e., $C$ has only $A_1$ singularities. Then $C$ is not an  MK-curve. 
\end{corollary}

\begin{corollary}
\label{corA2}
Let $C \subset \mathbb{P}^{2}_{\mathbb{C}}$ be a curve of even degree $n \geq 8$ which is cuspidal, i.e., $C$ has only $A_2$ singularities. Then $C$ is not an MK-curve. 
\end{corollary}

Now we focus on the case of curves with $E_{6}$ singularities. We start with the following.

\begin{theorem}
\label{thmE6}
There does not exist any MK curve $C$ of degree $n \in \{6,...,24\}$ with only $E_{6}$ singularities.
\end{theorem}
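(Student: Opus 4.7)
The approach is to convert the MK-condition into a Diophantine equation in a single unknown and observe that the prime $167$ blocks any solution in the given range. Suppose for contradiction that $C \subset \mathbb{P}^2_{\mathbb{C}}$ is an MK-curve of degree $n=2m$ whose singularities are precisely $n_6$ points of type $E_6$. By Lemma \ref{L1} each such singularity contributes $167/8$ to $m(C)$, and the MK-identity from Definition \ref{D2} becomes
\[
\frac{167}{8}\, n_6 \;=\; \frac{n}{2}(5n-6) \;=\; m(10m-6),
\]
or equivalently $167\, n_6 = 8\, m(10m-6)$.

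Because $167$ is prime and coprime to $8$, this equality forces $167 \mid m(10m-6)$. The hypothesis $n \in \{6,\ldots,24\}$, together with the fact that MK-curves have even degree, restricts $m$ to $\{3,4,\ldots,12\}$. In this range we have $1 \leq m \leq 12 < 167$ and $24 \leq 10m-6 \leq 114 < 167$, so neither factor is a multiple of $167$; by primality of $167$, the product $m(10m-6)$ is not either, contradicting $167 \mid m(10m-6)$.

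I do not expect any serious obstacle: the whole argument is really a one-line divisibility check, and no BMY-type input (unlike in Theorem \ref{thmn=8}) or dual-curve bound (unlike in Proposition \ref{P10}) is needed. It is worth noting, however, that the next value of $m$ at which this arithmetic barrier dissolves is $m=34$, since $10\cdot 34-6=334=2\cdot 167$; thus any extension of Theorem \ref{thmE6} to degrees $n \geq 68$ with only $E_6$ singularities would have to rely on a genuinely geometric ingredient, and the cleanliness of the proof is specific to the lucky primality of the denominator $167$ appearing in $m(E_6)$.
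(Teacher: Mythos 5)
Your proof is correct, and it takes a genuinely different route from the paper. The paper's proof is geometric: it applies Langer's orbifold BMY inequality to the pair $(\mathbb{P}^2_{\mathbb{C}},\frac{7}{12}C)$, where $\frac{7}{12}$ is the log canonical threshold of $E_6$, to get the upper bound $e_6\leq \frac{203}{1692}n^2-\frac{7}{47}n$, and then compares this with the count $e_6=\frac{80}{167}m^2-\frac{48}{167}m$ forced by the MK-condition; the two are compatible only for $m\geq \frac{738}{61}\approx 12.10$, giving the contradiction for $m\in\{3,\dots,12\}$. Your argument replaces all of this by the integrality constraint $167\,e_6=8m(10m-6)$ together with the primality of $167$, and it is both simpler and stronger in range: since the first $m$ with $167\mid m(10m-6)$ is $m=34$, your one-line check in fact rules out MK-curves with only $E_6$ singularities in every even degree up to $66$, whereas the BMY comparison cannot be pushed past $m=12$ (the quadratic coefficients satisfy $\frac{203}{423}>\frac{80}{167}$, so the inequality stops being contradictory). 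What the paper's approach buys instead is a genuine geometric by-product: a sharp upper bound on the number of $E_6$ singularities of \emph{any} curve of degree $n$, which is exactly what is attained by Bonnaf\'e's curve $C_{18}$ in Example \ref{exB} and is the reason the authors call the theorem ``sharp''; your divisibility argument yields no such bound and is silent in the degrees where $167\mid m(10m-6)$. Your closing observation about $m=34$ (degree $68$) being the first degree where the arithmetic obstruction disappears is accurate.
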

\begin{proof}
Assume that $C$ is an MK-curve with only $E_{6}$ singularities.  Denoting the number of $E_{6}$ singularities by $e_{6}$ and taking $n=2m$, the following condition holds 
\begin{equation}
\frac{167}{8}\cdot e_{6}=m(10m-6) = 10m^{2}-6m.
\end{equation}
Using Langer's orbifold Bogomolov-Miyaoka-Yau inequality for the pair $(\mathbb{P}^{2}_{\mathbb{C}},\frac{7}{12}C)$ we obtain
\begin{gather*}
3e_{6}\cdot \bigg(\frac{35}{12}+1\bigg) = \sum_{p \in {\rm Sing}(C)}3\bigg(\alpha(\mu_{p}-1)+1 - e_{orb}(p, \mathbb{P}^{2}_{\mathbb{C}},\alpha C)\bigg) \\ \leq (3\alpha - \alpha^{2})n^{2} - 3\alpha n = \frac{203}{144}n^2 - \frac{21}{12}n,
\end{gather*}
where $\mu_{p}$ is the local Milnor number of $p$, and $e_{orb}$ denotes the local orbifold Euler number of $p \in {\rm Sing}(C)$. Observe that for $\alpha =\frac{7}{12}$ the local orbifold Euler number is equal to $0$ and this follows from the fact that $\alpha=\frac{7}{12}$ is the log canonical threshold for $E_{6}$ singularities. This gives us the following upper-bound:
\begin{equation}
e_{6}\leq \frac{203}{1692}n^{2}-\frac{7}{47}n.
\end{equation}
By the above considerations, we arrive at the following inequality for every $m\in \{3, ..., 12\}$
\begin{equation}
\frac{203}{1692}\cdot(2m)^{2}-\frac{7}{47}\cdot(2m) \geq e_{6} = \frac{80}{167}m^{2}-\frac{48}{167}m,
\end{equation}
which means that 
$$m\geq \frac{738}{61}\approx 12.10,$$
a contradiction.
\end{proof}
The above result is rather technical, but also sharp as we can see right now.
\begin{example}
\label{exB}
In \cite[Example 3.4]{Bonnafe} Bonnaf\'e proved that there exists a reduced curve $C_{18}$ of degree $n=18$ with exactly $36$ singularities of type $E_{6}$. It turns our that $C_{18}$ is optimal in the sense that it reaches the upper-bound on the number of $E_{6}$ singularities, which can be verified using the above theorem. Observe that
$$\frac{n}{2}(5n-6)- \frac{167}{8}\cdot e_{6} = \frac{9}{2},$$
which means that $C_{18}$ misses the property of being an MK-curve by a node!
\end{example}
\begin{remark}
\label{rkB}
Among others, Bonnaf\'e explained in \cite{Bonnafe} that there exists a reduced curve $C_{18}'$ of degree $n=18$ with $72$ singularities of type $A_{2}$ and $12$ singularties of type $D_{4}$. Denoting by $n_2$ the number of $A_{2}$ singularities and by $d_{4}$ the number of $D_{4}$ singularities, observe that
$$\frac{n}{2}(5n-6) - 8\cdot n_{2} - 14\frac{5}{8}\cdot d_{4} = \frac{9}{2},$$
and unluckily $C_{18}'$ misses the property of being an MK-curve by a node.
\end{remark}
\begin{remark}
Using the same arguments one can show that there does not exist any MK-curve of degree $n \in \{6, ..., 118\}$ with only $E_{7}$ singularities. Unfortunately there is not direct argument which allows us to exclude the existence of such MK-curves with only $E_{7}$ singularities for every even degree $n\geq 6$.
\end{remark}
Now we look at MK-curves with only $E_{8}$ singularities.
\begin{theorem}
\label{thmE8}
There does not exist any MK curve $C$ of degree $n\geq 6$ with only $E_{8}$ singularities.
\end{theorem}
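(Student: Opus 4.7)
The plan is to follow the strategy of Theorem~\ref{thmE6}: apply Langer's orbifold BMY inequality to the pair $(\mathbb{P}^{2}_{\mathbb{C}},\alpha C)$, choose $\alpha$ equal to the log canonical threshold of the singularity at hand, and compare the resulting upper bound on the number of singularities with the exact value forced by the MK condition. The $E_{8}$ plane curve singularity has local equation $y^{3}+x^{5}=0$, so its log canonical threshold equals $\alpha=\frac{1}{3}+\frac{1}{5}=\frac{8}{15}$. The admissibility condition $\frac{3}{n}\leq\frac{8}{15}\leq\frac{2}{3}$ holds for every $n\geq 6$, so the pair is effective and log canonical in every relevant degree and no case analysis in $n$ will be needed. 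Exactly as in the $E_{6}$ case, the local orbifold Euler number $e_{orb}(p,\mathbb{P}^{2}_{\mathbb{C}},\alpha C)$ vanishes at each $E_{8}$ point, since $\alpha$ is the log canonical threshold.

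Next I would plug $\alpha=\frac{8}{15}$ and $\mu_{p}=8$ into Langer's inequality. Writing $e_{8}$ for the number of $E_{8}$ singularities of $C$ and simplifying, one obtains
$$\tfrac{71}{5}\,e_{8}\ \leq\ \tfrac{296}{225}\,n^{2}-\tfrac{8}{5}\,n,\qquad\text{equivalently}\qquad e_{8}\ \leq\ \tfrac{296}{3195}\,n^{2}-\tfrac{8}{71}\,n.$$
On the other hand, the MK condition $m(E_{8})\cdot e_{8}=\frac{n}{2}(5n-6)$ reads $\frac{1079}{40}\,e_{8}=\frac{n(5n-6)}{2}$, which pins $e_{8}$ down exactly as
$$e_{8}\ =\ \tfrac{100\,n^{2}-120\,n}{1079}.$$

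Substituting this exact value into the BMY bound reduces the whole theorem to a single scalar inequality,
$$\Bigl(\tfrac{100}{1079}-\tfrac{296}{3195}\Bigr)n^{2}\ +\ \Bigl(\tfrac{8}{71}-\tfrac{120}{1079}\Bigr)n\ \leq\ 0.$$
A straightforward common-denominator check shows that both coefficients are strictly positive (they equal $\tfrac{116}{3\,447\,405}$ and $\tfrac{112}{76\,609}$ respectively), so the inequality is violated for every $n>0$, and this contradiction rules out MK-curves with only $E_{8}$ singularities in every even degree $n\geq 6$. The one delicate point is purely arithmetic: the leading coefficients $\tfrac{100}{1079}\approx 0.09268$ and $\tfrac{296}{3195}\approx 0.09265$ differ only in the fifth decimal, and it is essential that both that gap and the linear-term gap fall on the favourable side. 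It is precisely this tightness that allows a single application of BMY at the log canonical threshold to kill every $n$ simultaneously, in contrast with the $E_{6}$ case of Theorem~\ref{thmE6}, where only a bounded range of degrees could be ruled out.
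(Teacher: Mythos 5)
Your argument is correct and follows essentially the same route as the paper: apply Langer's orbifold BMY inequality at the log canonical threshold $\alpha=\frac{8}{15}$ to obtain the bound $e_{8}\leq \frac{296}{3195}n^{2}-\frac{8}{71}n$, and compare it with the exact value of $e_{8}$ forced by the MK condition $\frac{1079}{40}e_{8}=\frac{n}{2}(5n-6)$, getting a contradiction for all degrees. The only difference is cosmetic (you work in $n$ rather than $m=n/2$ and make the final sign check of the two coefficients explicit), and your arithmetic is right.
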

\begin{proof}
We are going to mimic the argument presented for MK-curves with only $E_{6}$ singularities. Assume that $C$ is an MK-curve of degree $n=2m\geq 6$ with only $E_{8}$ singularities. We have then
\begin{equation}
\frac{1079}{40}\cdot e_{8}=m(10m-6) = 10m^{2}-6m,
\end{equation}
where $e_{8}$ denotes the number of $E_{8}$ singularities. Using Langer's orbifold Bogomolov-Miyaoka-Yau inequality for the pair $(\mathbb{P}^{2}_{\mathbb{C}},\frac{8}{15}C)$, where $\alpha=\frac{8}{15}$ is the log canonical threshold for $E_{8}$ singularities, we obtain the following upper-bound:

\begin{equation}
e_{8}\leq \frac{296}{3195}n^{2}-\frac{8}{71}n.
\end{equation}
Combining this inequality with the condition on MK-curves with only $E_{8}$ singularities, we arrive at
\begin{equation}
\frac{1184}{3195} m^{2}-\frac{16}{71} m \geq e_{8} = \frac{400}{1079}m^{2}-\frac{240}{1079}m,
\end{equation}
so we have a contradiction for every $m\geq 3$.
\end{proof}

\section{On the freeness of simply singular plane curves}

 Let $C$ be a reduced curve $\mathbb{P}^{2}_{\mathbb{C}}$ of degree $n$ given by $f \in S :=\mathbb{C}[x,y,z]$. We denote by $J_{f}$ the Jacobian ideal generated by the partials derivatives $\partial_{x}f, \, \partial_{y}f, \, \partial_{z}f$. We define  the \textbf{minimal degree of relations} among the partial derivatives, denoted here by ${\rm mdr}(f)$, which is equal to the minimal degree $r$ of a non-trivial triple $(a,b,c) \in S_{r}^{3}$ such that 
$$a\cdot \partial_{x} f + b\cdot \partial_{y}f + c\cdot \partial_{z}f = 0.$$
We denote by $\mathfrak{m} = \langle x,y,z \rangle$ the irrelevant ideal. Consider the graded $S$-module $N(f) = I_{f} / J_{f}$, where $I_{f}$ is the saturation of $J_{f}$ with respect to $\mathfrak{m}$. It is known that $N(f)$ is an Artinian graded $S$-module, in particular the integer
$$\nu(C)=\max _k \dim N(f)_k$$
is well defined.
\begin{definition}
The integer $\nu(C)$ is called the {\it freeness defect} of the reduced complex plane curve $C$.
\end{definition}
This definition is justified by the fact that the curve $C$ is free (resp. nearly free) if and only if $\nu(C)=0$ (resp. $\nu(C)=1$). For more on free and nearly free curves we refer to \cite{DP,DimStic}. In particular, we have shown in  \cite[Theorem 2.9]{DP} that the maximizing curves in the study of surfaces with maximal Picard number form a special class of free curves.
In this section we study the freeness defect of the MK-curves. We start with a general result applying to all simply singular curves of even degree.
\begin{theorem}
\label{thmNU}
Let $C$ be a reduced curve $\mathbb{P}^{2}_{\mathbb{C}}$ of even degree $n=2m$ having only ${\rm ADE}$ singularities. Then the freeness defect of $C$ is given by the following equality:
$$\nu(C)= 3m^2-3m+1 -\tau (C).$$
\end{theorem}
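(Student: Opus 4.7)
The plan is to compute $\nu(C)$ as the value of the Hilbert function of $N(f)$ at a single middle degree. The key input is the self-duality of the graded $S$-module $N(f)$ for reduced plane curves with weighted homogeneous singularities, due to Sernesi (ADE singularities are quasi-homogeneous, so this applies): one has
\[
\dim N(f)_k=\dim N(f)_{T-k},\qquad T=3(n-2)=6m-6,
\]
and the maximum of this Hilbert function is attained at the middle degree $T/2=3m-3$. Consequently $\nu(C)=\dim N(f)_{3m-3}$. Using the tautological short exact sequence $0\to N(f)\to M(f)\to S/I_f\to 0$, the target becomes
\[
\nu(C)=\dim M(f)_{3m-3}-\dim(S/I_f)_{3m-3},
\]
so it suffices to compute each summand.

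For $\dim(S/I_f)_{3m-3}$: the ideal $I_f$ cuts out the Tjurina zero-dimensional subscheme of $\P^2$, of length $\tau(C)$, and for curves with only ADE singularities a standard Castelnuovo-Mumford regularity estimate shows its Hilbert function has already stabilized at the constant value $\tau(C)$ well before degree $3m-3$. For $\dim M(f)_{3m-3}$: since $J_f$ is generated by the three partial derivatives of $f$, the presentation $0\to D_0(f)\to S(-(n-1))^3\to J_f\to 0$ involving the Jacobian syzygy module $D_0(f)$ yields
\[
\dim M(f)_k=\binom{k+2}{2}-3\binom{k-n+3}{2}+\dim D_0(f)_{k-n+1}.
\]
Specializing to $k=3m-3$ and $n=2m$, the shift is $k-n+1=m-2$ and a direct calculation gives $\binom{3m-1}{2}-3\binom{m}{2}=3m^2-3m+1$, so $\dim M(f)_{3m-3}=3m^2-3m+1+\dim D_0(f)_{m-2}$.

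The crux --- and the main obstacle --- is proving $\dim D_0(f)_{m-2}=0$, equivalently $\mathrm{mdr}(f)\geq m-1$, for every reduced plane curve of even degree $2m$ with only ADE singularities. I would derive this by combining the du Plessis-Wall inequality with the ADE bound $\tau(C)\leq 3m(m-1)+1$ recalled in Theorem \ref{T1}, together with the multiplicity-three constraint intrinsic to ADE singularities: a Jacobian syzygy of degree $r\leq m-2$ imposes strong geometric restrictions on the partial derivatives, and under the ADE hypothesis these propagate to force $\tau(C)$ strictly above the above cap, contradicting Theorem \ref{T1}. Once this vanishing is established, substitution in the previous formulas yields $\nu(C)=3m^2-3m+1-\tau(C)$ as claimed.
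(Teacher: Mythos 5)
Your overall architecture (self-duality of $N(f)$, maximum at the middle degree $T/2=3m-3$, the exact sequence $0\to N(f)\to M(f)\to S/I_f\to 0$, and the Hilbert function bookkeeping giving $\dim M(f)_{3m-3}=3m^2-3m+1+\dim D_0(f)_{m-2}$) is a reasonable re-derivation of what the paper simply cites as \cite[Theorem 1.2 (2)]{D_CMH}, and your identification of the crux as $\mathrm{mdr}(f)\geq m-1$ matches the paper, which imports exactly this from the proof of \cite[Theorem 2.9]{DP}. The genuine gap is that your proposed proof of this crux does not work. The only quantitative way a syzygy of low degree $r$ forces a large Tjurina number is the du Plessis--Wall lower bound $\tau(C)\geq (d-1)(d-1-r)$; with $d=2m$ and $r\leq m-2$ this gives only $\tau(C)\geq (2m-1)(m+1)=2m^2+m-1$, which exceeds the ADE cap $3m(m-1)+1=3m^2-3m+1$ precisely when $m^2-4m+2<0$, i.e.\ only for $m=3$. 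So for every even degree $n\geq 8$ the contradiction you announce (``a syzygy of degree $\leq m-2$ forces $\tau$ above the cap'') simply does not materialize, and no amount of combining du Plessis--Wall with $\tau(C)\leq 3m(m-1)+1$ will yield $\mathrm{mdr}(f)\geq m-1$. The actual source of this inequality in \cite{DP} is a different and stronger input: a lower bound for $\mathrm{mdr}(f)$ in terms of the minimal Arnold exponent (log canonical threshold) of the singularities, valid for quasi-homogeneous singularities; since every ADE singularity has log canonical threshold strictly larger than $1/2$, one gets $\mathrm{mdr}(f)> d/2-2=m-2$, hence $\mathrm{mdr}(f)\geq m-1$. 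Without an input of this type your argument does not close.

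Two secondary points. First, the step $\dim (S/I_f)_{3m-3}=\tau(C)$ is not a ``standard Castelnuovo--Mumford regularity estimate'': the generic bound for the Tjurina scheme (it sits inside a complete intersection of type $(d-1,d-1)$) only gives stabilization of the Hilbert function from degree $2d-4=4m-4$ on, which is beyond $3m-3$; the vanishing $h^1(\mathcal{I}_Z(3m-3))=0$ you need is true in this setting but again rests on the quasi-homogeneity of the singularities and on the same syzygy information encoded in \cite[Theorem 1.2]{D_CMH}, so it requires a precise citation or proof rather than a regularity remark. Second, the claim that the maximum of $k\mapsto \dim N(f)_k$ is attained at the middle degree is a genuine theorem (a monotonicity statement for this Hilbert function), not a formal consequence of the self-duality alone --- a symmetric Hilbert function need not be unimodal a priori --- so it too should be cited explicitly.
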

\proof
We have shown in the proof of \cite[Theorem 2.9]{DP} that for a plane curve $C:f=0$ of degree $n=2m$ having only ${\rm ADE}$ singularities the minimal degree of a relation among the partial derivatives satisfies the inequality
$${\rm mdr}(f) \geq m-1.$$
In other words, we have
$${\rm mdr}(f) \geq \frac{n-2}{2}$$
and our claim follows from \cite[Theorem 1.2 (2)]{D_CMH} by a direct computation.
\endproof
\begin{corollary}
\label{corNU}
Let $C$ be a reduced MK-curve $\mathbb{P}^{2}_{\mathbb{C}}$ of even degree $n=2m\geq 6$. Then the freeness defect of $C$ is given by the following equality.
$$\nu(C)= \epsilon(C)+1-\frac{m(m+3)}{3}.$$
In particular, one has in this situation
$$\epsilon(C)\geq \frac{m(m+3)}{3}-1,$$
where $\epsilon(C)$ is the constant defined in Corollary \ref{C1}.
\end{corollary}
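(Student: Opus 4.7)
The plan is to combine the two earlier ingredients directly: Theorem \ref{thmNU} gives an exact formula for the freeness defect in terms of the total Tjurina number when $C$ has only ADE singularities and even degree $n=2m$, namely $\nu(C) = 3m^2 - 3m + 1 - \tau(C)$. On the other hand, Corollary \ref{C1} characterizes MK-curves as those satisfying $\tau(C) + \epsilon(C) = \frac{n}{6}(5n-6) = \frac{m(10m-6)}{3}$. Since every MK-curve is by definition simply singular of even degree $\geq 6$, both results apply.

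The first step is to use the MK-curve relation to solve for $\tau(C)$, obtaining $\tau(C) = \frac{m(10m-6)}{3} - \epsilon(C)$. The second step is to substitute this into the formula from Theorem \ref{thmNU} and simplify; a routine calculation gives
\[
\nu(C) = 3m^2 - 3m + 1 - \frac{10m^2 - 6m}{3} + \epsilon(C) = \epsilon(C) + 1 - \frac{m(m+3)}{3},
\]
which is exactly the claimed equality.

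For the ``in particular'' statement, the key observation is that $\nu(C) = \max_k \dim N(f)_k \geq 0$, since it is defined as the maximal dimension of a graded component of a graded module. Combined with the equality just established, nonnegativity of $\nu(C)$ immediately yields $\epsilon(C) \geq \frac{m(m+3)}{3} - 1$.

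There is no real obstacle here: the statement is a purely algebraic consequence of two results already proved in the paper, so the ``proof'' is a one-line substitution followed by an arithmetic simplification and an appeal to $\nu(C) \geq 0$. The only thing to be careful about is checking that the hypotheses of Theorem \ref{thmNU} (simple singularities, even degree) are indeed automatically satisfied for MK-curves by Definition \ref{D2}.
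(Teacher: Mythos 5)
Your proposal is correct and is exactly the argument the paper uses: its proof of Corollary \ref{corNU} is precisely ``Use Theorem \ref{thmNU} and Corollary \ref{C1}'', i.e. the same substitution of $\tau(C)=\frac{m(10m-6)}{3}-\epsilon(C)$ into $\nu(C)=3m^2-3m+1-\tau(C)$, with the ``in particular'' part following from $\nu(C)\geq 0$. Your arithmetic and the check of the hypotheses are both fine.
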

\proof It follows from Theorem \ref{thmNU} and Corollary \ref{C1}.
\endproof

\begin{example}
\label{exNU}
Consider now the three types of MK-sextics $C$ described in Definition \ref{defI} and Example \ref{E1}. When $C$ is of type $\mathfrak{A}$, then $C$ has $6$ singularities of type $A_{1}$ and $4$ singularities of type $A_3$. It follows that in this case
$$\nu(C)= 19-\tau(C)=19-6-4\cdot 3=1,$$
and hence $C$ is nearly free. When $C$ is of type $\mathfrak{B}$, then $C$ has $9$ singularities of type $A_2$. We get
$$\nu(C)= 19-\tau(C)=19-9 \cdot 2=1,$$
and hence again this sextic $C$ is nearly free. When $C$ is of type $\mathfrak{C}$, then $C$ has $3$ singularities of type $A_{1}$ and $4$ singularities of type $D_{4}$. It follows that in this case
$$\nu(C)= 19-\tau(C)=19-3-4\cdot 4=0,$$
and hence in this case $C$ is free.
\end{example}

\begin{example}
\label{exNU2}
Consider now the curve $C_{18}$ discussed in Example \ref{exB} above.
This curve has degree $n=18$ and has $36$ singularities of type $E_{6}$. It follows that
$$\nu(C_{18})= 217-\tau(C)=217-36 \cdot 6=1,$$
and hence $C_{18}$ is nearly free. We look now at the curve $C'_{18}$ discussed in Remark \ref{rkB} above.
This curve has degree $n=18$ and has $72$ singularities of type $A_2$ and $12$ singularities of type $D_4$. It follows that
$$\nu(C'_{18})= 217-\tau(C)=217-72 \cdot 2 - 12 \cdot 4=25,$$
and hence $C'_{18}$ is very far of being a free curve. Since both $C_{18}$ and $C'_{18}$ are close to being MK-curves, this example may suggest that the property of being an MK-curve is not related to some freeness property, in spite of the case of MK-sextics discussed in Example \ref{exNU} above.
\end{example}

\section*{Acknowledgments}
We would like to thank an anonymous referee for valuable comments that allowed us to improve the paper. 

Alexandru Dimca was partially supported by the Romanian Ministry of Research and Innovation, CNCS - UEFISCDI, Grant \textbf{PN-III-P4-ID-PCE-2020-0029}, within PNCDI III.

Piotr Pokora as partially supported by the National Science Center (Poland) Sonata Grant Nr \textbf{2018/31/D/ST1/00177}.

\vskip 0.5 cm


\begin{thebibliography}{000}
\bibitem{Bonnafe}
C. Bonnaf\'e, Some singular curves and surfaces arising from invariants of complex reflection groups. \textit{Exp. Math.} \textbf{30(3)}: 429 -- 440 (2021); correction ibid. 29, No. 3, 360 (2020).

\bibitem{Dbook2}
A. Dimca, {\em Singularities and topology of hypersurfaces}, Universitext, Springer-Verlag, New York, 1992. 

\bibitem{D_CMH}
A. Dimca,  On rational cuspidal plane curves and the local cohomology of Jacobian rings. \textit{Comment. Math. Helv.} \textbf{94}: 689 -- 700 (2019). 

\bibitem{DP}
A. Dimca and P. Pokora, Maximizing curves viewed as free curves. \textit{International Mathematical Research Notes} \textbf{(22)}: 19156 -- 19183 (2023),


\bibitem{DimStic} A. Dimca and G. Sticlaru, Free and Nearly Free Curves vs. Rational Cuspidal Plane Curves. \textit{Publ. Res. Inst. Math. Sci.} \textbf{54(1)}: 163 -- 179 (2018). 

\bibitem{duP}
A. A. Du Plessis and C. T. C. Wall, Application of the theory of the discriminant to highly singular plane curves. \textit{Math. Proc. Camb. Philos. Soc.} \textbf{126(2)}: 259 -- 266 (1999).

\bibitem{GS} G.-M. Greuel and E. Shustin, Plane Algebraic Curves with Prescribed Singularities. In: Cisneros-Molina, J.L., Lê, D.T., Seade, J. (eds) Handbook of Geometry and Topology of Singularities II. Springer, Cham. \url{https://doi.org/10.1007/978-3-030-78024-1_2} (2021).

\bibitem{Hirzebruch}
F. Hirzebruch, Singularities of algebraic surfaces and characteristic numbers. Algebraic geometry, Proc. Lefschetz Centen. Conf., Mexico City/Mex. 1984, Part I, \textit{Contemp. Math.} \textbf{58}: 141 -- 155 (1986).

\bibitem{Ivinskis}
K. Ivinskis, Normale Fl\"{a}chen und die Miyaoka-Kobayashi Ungleichung. Diplomarbeit, Bonn, (1985).

\bibitem{Koba}
R. Kobayashi, Lectures at the Max Planck Institute in Bonn, May 1985 (unpublished notes).
\bibitem{Kob1}
R. Kobayashi, S. Nakamura, F. Sakai, A numerical characterization of ball quotients for normal surfaces with branch loci. \textit{Proc. Japan Acad., Ser. A} \textbf{65(7)}: 238 -- 241 (1989).
\bibitem{Kob2}
R. Kobayashi, Uniformization of complex surfaces. \textit{Kähler metric and moduli spaces, Adv. Stud. Pure Math.} \textbf{18(2)}: 313 -- 394 (1990).
\bibitem{Langer}
A. Langer, Logarithmic orbifold Euler numbers of surfaces with applications. \textit{Proc. London Math. Soc.} \textbf{86}: 358 -- 396 (2003).
\bibitem{Zar}
O. Zariski, On the non-existence of curves of order 8 with 16 cusps. \textit{Am. J. Math.} \textbf{53}: 309 -- 318 (1931).
\end{thebibliography}
\end{document}